\newif\ifcolorcomments
\newcommand{\allowcomments}[4]{
\newcommand{#1}[1]{\ifdraft{\ifcolorcomments{\textcolor{#4}{##1 --#3}}\else{\textsl{ ##1 \ --#3}}\fi}\else{}\fi}
}
\allowcomments{\comDS}{DS}{David}{green}
\allowcomments{\comAB}{AB}{Ayreena}{magenta}
\def\bc{\begin{center}}
\def\ec{\end{center}}
\def\be{\begin{equation}}
\def\ee{\end{equation}}
\def\N{\mathbb N}
\def\Z{\mathbb Z}
\def\R{\mathbb R}
\newcommand{\0}{\mathbf 0}
\newcommand{\qq}{\mathbf q}
\newcommand{\pp}{\mathbf p}
\newcommand{\bb}{\mathbf b}
\newcommand{\xx}{\mathbf x}
\newcommand{\yy}{\mathbf y}
\newcommand{\vv}{\mathbf v}
\newcommand{\ww}{\mathbf w}
\newcommand{\zz}{\mathbf z}
\newcommand{\eee}{\mathbf e}
\def\Z{\mathbb Z}
\newtheorem{lem}{Lemma}[section]
\newtheorem{dfn}[lem]{Definition}
\newtheorem{pro}[lem]{Proposition}
\newtheorem{thm}[lem]{Theorem}
\newtheorem{cor}[lem]{Corollary}
\numberwithin{equation}{section}
\newif\ifdraft\drafttrue
\begin{document}

 \newpage
\title[Hausdorff measure and Dirichlet's Theorem in higher dimensions]{Generalised Hausdorff measure of  sets of Dirichlet non-improvable matrices in higher dimensions}
\author[A. Bakhtawar]{Ayreena ~Bakhtawar}
\address{School of Mathematics and Statistics, University of New South Wales, Sydney, NSW 2052, Australia}
\email{A Bakhtawar: A.Bakhtawar@unsw.edu.au}
\author[D. Simmons]{David~Simmons}
\address{University of York, Department of Mathematics, Heslington, York, YO10 5DD, UK}
\email{D Simmons: David.Simmons@york.ac.uk}

\begin{abstract}
Let $\psi:\mathbb R_{+}\to \mathbb R_{+}$ be a non-increasing function. A pair $(A,\mathbf b),$ where $A$ is a real $m\times n$ matrix and $\mathbf b\in\mathbb R^{m},$ is said to be $\psi$-Dirichlet improvable, if the system
$$\|A\mathbf q +\mathbf b-\mathbf p\|^m<\psi(T), \quad \|\mathbf q\|^n<T$$
is solvable in $\mathbf p\in\mathbb Z^{m},$ $\mathbf q\in\mathbb Z^{n}$
for all sufficiently large $T$ where $\|\cdot\|$ denotes the supremum norm. For $\psi$-Dirichlet non-improvable sets, Kleinbock--Wadleigh (2019) proved the Lebesgue measure criterion whereas Kim--Kim (2022) established the Hausdorff measure results. In this paper we obtain the generalised Hausdorff $f$-measure version of Kim--Kim (2022) results for $\psi$-Dirichlet non-improvable sets.
\end{abstract}

\maketitle

\section{Introduction}
 To begin with, we recall the higher dimensional general form of Dirichlet's Theorem (1842). 
 \noindent Let $m, n$  be positive integers and let $X^{mn}$ denotes the space of real $m \times n$ matrices.
\begin{thm}[Dirichlet's Theorem]\label{HigDir}
  Given any $A~ \in~X^{mn}$ and $T>1,$ there exist $\pp \in \Z^{m}$ and $\qq \in \Z^n \setminus  \{0\}$
   such that
  \begin{equation}\label{HDirlinearform}
    \|A\qq -\pp\|^m \le \frac{1}{T} \text{ and }  \|\qq\|^n<T.
  \end{equation}
\end{thm}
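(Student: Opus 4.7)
The plan is to prove this classical statement via Minkowski's first theorem on convex bodies, which is the standard route to Dirichlet-type simultaneous approximation results in higher dimension.

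First I would introduce the unimodular linear map $g_A \colon \R^{m+n} \to \R^m \times \R^n$ given by $(\pp, \qq) \mapsto (A\qq - \pp,\, \qq)$, and set $L := g_A(\Z^{m+n})$. Since $\det g_A = \pm 1$, the lattice $L$ has covolume $1$. Next I would consider the closed, centrally symmetric convex body
\[
K_T := \bigl\{(\xx,\yy) \in \R^m \times \R^n : \|\xx\| \le T^{-1/m},\ \|\yy\| \le T^{1/n} \bigr\},
\]
whose volume is exactly $(2T^{-1/m})^m (2T^{1/n})^n = 2^{m+n}$. The closed form of Minkowski's theorem then supplies a nonzero point of $L$ inside $K_T$, i.e., a pair $(\pp,\qq) \in \Z^{m+n} \setminus \{0\}$ with $\|A\qq - \pp\|^m \le 1/T$ and $\|\qq\|^n \le T$. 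To rule out $\qq = 0$, I would observe that this would force $\|\pp\| \le T^{-1/m} < 1$ (using $T > 1$), hence $\pp = 0$, contradicting nonvanishing.

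The only technical subtlety, which I expect to be the main (and only real) obstacle, is upgrading the non-strict inequality $\|\qq\|^n \le T$ to the strict one $\|\qq\|^n < T$ demanded by the statement; this is needed because the Minkowski step naturally produces closed inequalities. I would handle it by a limiting argument: apply the above step with $T$ replaced by $T - \vep_k$ for a sequence $\vep_k \searrow 0$, producing $(\pp_k, \qq_k) \in \Z^{m+n} \setminus \{0\}$ with $\|\qq_k\|^n \le T - \vep_k < T$ and $\|A\qq_k - \pp_k\|^m \le 1/(T - \vep_k)$. Since $\qq_k$ is confined to the finite set $\Z^n \cap \{\qq : \|\qq\| \le T^{1/n}\}$ and $\pp_k$ is then confined to a finite subset of $\Z^m$ (lying within bounded distance of $A\qq_k$), a constant subsequence exists, yielding a single pair $(\pp, \qq)$ that satisfies $\|\qq\|^n < T$ and, by letting $\vep_k \searrow 0$ along that subsequence, $\|A\qq - \pp\|^m \le 1/T$, completing the proof.
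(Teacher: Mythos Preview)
Your argument via Minkowski's first theorem is correct and is the standard proof of this classical result; the only small point to add is that you should choose each $\vep_k < T-1$ so that $T-\vep_k>1$ and the step ruling out $\qq_k=0$ still applies. The paper itself does not supply a proof of this theorem --- it is merely recalled as background (``we recall the higher dimensional general form of Dirichlet's Theorem (1842)'') --- so there is no proof in the paper to compare against.
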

Here $\|\cdot\|$ denotes the supremum norm in $\mathbb R^{i},$ $i\in\N.$ 
Theorem \ref{HigDir} guarantees a nontrivial integer solution for all $T.$ 
The standard application of \eqref{HDirlinearform} is the following corollary, guaranteeing that such a system is solvable for an unbounded set of $T.$ 
\begin{cor}For any $A \in
   X^{mn}$ there exist infinitely many integer vectors $\qq \in \mathbb Z^n$
  such that
  \begin{equation}\label{AS2.11}
   \|A\qq -\pp\|^m \le \frac{1}{\|\qq \|^{n}}  \text{ for some } \pp \in \mathbb Z^{m}.
     \end{equation}
\end{cor}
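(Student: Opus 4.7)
The plan is to apply Dirichlet's Theorem (Theorem~\ref{HigDir}) for a sequence of heights $T \to \infty$ and then show that the resulting integer vectors $\qq_T$ cannot all coincide, so they supply infinitely many solutions to \eqref{AS2.11}. For each $T > 1$, Theorem~\ref{HigDir} furnishes $\pp_T \in \Z^m$ and $\qq_T \in \Z^n \setminus \{\0\}$ with $\|A\qq_T - \pp_T\|^m \le 1/T$ and $\|\qq_T\|^n < T$. The second inequality rearranges to $1/T \le 1/\|\qq_T\|^n$, so combining with the first gives
\[
\|A\qq_T - \pp_T\|^m \;\le\; \frac{1}{T} \;\le\; \frac{1}{\|\qq_T\|^n},
\]
and therefore the pair $(\pp_T, \qq_T)$ satisfies \eqref{AS2.11}.

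The hard part will be guaranteeing that the family $\{\qq_T : T > 1\}$ contains infinitely many distinct vectors. I would split into two cases. First, if there exists $\qq_0 \in \Z^n \setminus \{\0\}$ with $A\qq_0 \in \Z^m$, then for each $j \in \N$ the pair $\qq = j\qq_0$, $\pp = j(A\qq_0)$ makes the left-hand side of \eqref{AS2.11} equal to zero, furnishing infinitely many distinct solutions directly. Otherwise, for every $\qq \in \Z^n \setminus \{\0\}$ the quantity $d(\qq) := \min_{\pp \in \Z^m} \|A\qq - \pp\|$ is strictly positive. If the family $\{\qq_T\}$ were to take only finitely many values, pigeonhole would produce a fixed $\qq^* \in \Z^n \setminus \{\0\}$ arising for arbitrarily large $T$; but then $d(\qq^*)^m \le \|A\qq^* - \pp_T\|^m \le 1/T$ with $T$ unbounded forces $d(\qq^*) = 0$, contradicting the standing assumption of this case.

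In summary, the proof reduces to two steps: reading off the bound $\|A\qq - \pp\|^m \le 1/\|\qq\|^n$ directly from the two inequalities supplied by Theorem~\ref{HigDir}, and the case-splitting argument above to exclude the possibility that the Dirichlet solutions stabilise at finitely many $\qq$'s. No further ingredients beyond Theorem~\ref{HigDir} and an elementary limiting argument are needed.
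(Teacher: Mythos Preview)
Your argument is correct and is exactly the standard derivation of this corollary from Dirichlet's Theorem. The paper itself does not supply a proof: it simply introduces the corollary as ``the standard application of \eqref{HDirlinearform}'' and moves on, so there is no paper proof to compare against beyond the implicit expectation that one combines the two Dirichlet inequalities and then argues that infinitely many distinct $\qq$ must occur. Your write-up does precisely this, and the case split (rational relation $A\qq_0\in\Z^m$ versus $d(\qq)>0$ for all nonzero $\qq$) is the clean way to handle the ``infinitely many'' part.
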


The two statements above give rise to two possible ways to pose Diophantine approximation problems sometimes referred to as uniform vs asymptotic approximation results: that is, looking for solvability of inequalities for all large enough $T$ vs. for some arbitrarily large $T.$ The rate of approximation given in above two statements works for all real matrices $A\in X^{mn},$  which serves as the beginning of the \textit{metric theory of Diophantine approximation,} a field concerned with understanding sets of $A\in X^{mn}$ satisfying similar conclusions but with the right hand sides replaced by faster decaying functions of $T$ and $\|\qq\|^{n}$ respectively. Those sets are well studied in the asymptotic setup \eqref{AS2.11} long ago.

Indeed, for a function $\psi:\R_{+}\to \R_{+}$ a matrix $A\in X^{mn}$ is said to be $\psi$-approximable if the inequality \footnote{\label{FT1}Here we use the definition as in \cite{KlWa_19,KlMa_99}, whereas in Section \ref{Sec4} we will consider slightly different definition such as in \cite{BeVe_010} where instead of \eqref{AS2.1} the inequality $\|A\qq-\bb\|<\psi(\|\qq\|)$ is used.}
  \begin{equation}\label{AS2.1}
   \|A\qq -\pp\|^m < \psi({\|\qq \|^{n}})  \text{ for some } \pp \in \mathbb Z^{m}
     \end{equation}
is satisfied for infinitely many integer vectors $\qq \in \mathbb Z^n.$ As the set of $\psi$-approximable matrices is translation invariant under integer vectors, we can restrict attention to $mn$-dimensional unit cube $[0,1]^{mn}.$
Then the set of $\psi$-approximable matrices in $[0,1]^{mn}$ will be denoted by
$W_{m,n}(\psi).$

The following result gives the size of the set $W_{m,n}(\psi )$ in terms of Lebesgue measure.

\begin{thm}[Khintchine--Groshev Theorem, \cite{Gr_41}] Given a non-increasing $\psi$, the set $W_{m, n}(\psi)$ has zero (respectively full) Lebesgue measure if and only if the series $\sum_k\psi(k)$ converges (respectively, diverges).
\end{thm}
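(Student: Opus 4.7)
The plan is to split the proof into convergence and divergence halves, as is classical for Khintchine-type theorems. For each nonzero $\qq \in \Z^n$ I would define
$$E_\qq = \{A \in [0,1]^{mn} : \|A\qq - \pp\| < \psi(\|\qq\|^n)^{1/m} \text{ for some } \pp \in \Z^m\},$$
so that $W_{m,n}(\psi) = \limsup_{\|\qq\| \to \infty} E_\qq$. A direct Fubini computation shows $\lambda(E_\qq) \asymp \psi(\|\qq\|^n)$: fixing $\pp$, the set of $A$ with $\|A\qq - \pp\| < \psi(\|\qq\|^n)^{1/m}$ is (row by row) a slab of thickness $\asymp \psi(\|\qq\|^n)^{1/m}/\|\qq\|$ around an affine hyperplane, and taking the product over the $m$ rows and summing over admissible $\pp$ in $[0,1]^{mn}$ yields the claimed estimate.

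For the convergence half, summing over $\qq$ gives
$$\sum_{\qq \in \Z^n \setminus \{0\}} \lambda(E_\qq) \asymp \sum_{k \geq 1} k^{n-1}\psi(k^n) \asymp \sum_{j \geq 1} \psi(j),$$
where the first comparison uses that there are $\asymp k^{n-1}$ vectors of sup-norm $k$, and the second is the substitution $j = k^n$ together with the monotonicity of $\psi$. When $\sum_k \psi(k) < \infty$ the ordinary Borel--Cantelli lemma then yields $\lambda(W_{m,n}(\psi)) = 0$.

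For the divergence half, Borel--Cantelli alone is insufficient since the $E_\qq$ are not independent, and one must prove quasi-independence in the form of an overlap estimate
$$\lambda(E_\qq \cap E_{\qq'}) \ll \lambda(E_\qq)\,\lambda(E_{\qq'})$$
for $\qq \neq \pm\qq'$ with $\|\qq\|, \|\qq'\|$ in comparable dyadic ranges. This is a lattice-point counting problem: the intersection consists of matrices $A$ such that both $A\qq$ and $A\qq'$ are close (mod $\Z^m$) to integers, and the resulting slabs around the hyperplanes $\{A : A\qq = \pp\}$ and $\{A : A\qq' = \pp'\}$ become essentially transverse unless $\qq, \qq'$ are nearly parallel; the near-parallel case is handled using the monotonicity of $\psi$ and Minkowski's theorem. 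Combining this with the Chung--Erd\H{o}s form of the divergence Borel--Cantelli lemma gives $\lambda(W_{m,n}(\psi)) > 0$, and a zero-one law (or invariance under $\Z^{mn}$-translations plus an ergodicity argument) upgrades this to full measure.

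The main obstacle is the overlap estimate in the divergence half: the convergence direction is a routine measure bookkeeping, but controlling $\lambda(E_\qq \cap E_{\qq'})$ uniformly requires genuine geometric input. The monotonicity of $\psi$ enters crucially both in the reduction of $\sum_\qq \psi(\|\qq\|^n)$ to $\sum_k \psi(k)$ and in the treatment of the near-parallel case; weakening this hypothesis is exactly where subtler (Duffin--Schaeffer style) considerations would begin to appear, but under the stated monotonicity assumption the classical argument outlined above suffices.
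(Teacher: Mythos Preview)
The paper does not supply a proof of this statement: the Khintchine--Groshev theorem is quoted as a classical result with a citation to \cite{Gr_41} and used only as background. There is therefore no in-paper proof to compare against.

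Your outline is the standard classical argument and is essentially correct in spirit. A couple of small caveats. First, the pairwise overlap inequality $\lambda(E_\qq\cap E_{\qq'})\ll \lambda(E_\qq)\lambda(E_{\qq'})$ does not hold for \emph{every} pair $\qq\neq\pm\qq'$; what one actually proves (and what the Chung--Erd\H{o}s lemma needs) is an averaged second-moment bound of the shape
\[
\sum_{\|\qq\|,\|\qq'\|\le Q}\lambda(E_\qq\cap E_{\qq'}) \ \ll \ \Bigl(\sum_{\|\qq\|\le Q}\lambda(E_\qq)\Bigr)^2,
\]
with the near-parallel and large-gcd pairs absorbed into the error. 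Second, the zero--one step is most cleanly done via Cassels' or Gallagher-type lemma (or, as you say, an ergodicity argument for the $\Z^{mn}$-action), but this is routine once positive measure is established. With those adjustments your sketch matches the standard proof in the literature.
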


Let us now briefly describe what is known in the setting of \eqref{HDirlinearform}. For a non-increasing function $\psi:[T_{0},\infty)\to \R_{+}$ with $T_{0}>1$ fixed, consider the set $D_{m,n}(\psi )$ of $\psi$-Dirichlet improvable matrices consisting of $A\in X^{mn}$ such that the system
\begin{equation*} 
  \|A\qq -\pp\|^m \le \psi{(T)} \text{ and }  \|\qq\|^n<T                
\end{equation*}
has a
nontrivial integer solution for all large enough $T$. Elements of the complementary set, $D_{m,n}(\psi )^{c},$ will be referred as $\psi$-Dirichlet non-improvable matrices.

With the notation $\psi_{a}(x):=x^{-a},$ \eqref{HDirlinearform} implies that $D_{1, 1}(\psi_{1})=\R,$ and that for any $m,n$ every matrix is $\psi_{1}$-Dirichlet improvable. It was observed in  \cite{DaSc_70} that for $\min(m,n)=1$ and in \cite{KlWe_08} for the general case, that the Lebesgue measure of $D_{m, n}(c\psi_{1})$ of the set $c\psi_{1}$-Dirichlet improvable matrices is zero for any $c<1,$ . 

The theory of inhomogeneous Diophantine approximation starts by replacing the values of a system of linear forms $A\qq$ by those of a system of affine forms $\qq \mapsto A\qq+\bb$ where  $A\in X^{mn}$ and $\bb\in\R^{m}.$ Following \cite{KlWa_18}, for a non-increasing function $\psi:[T_{0},\infty)\to \R_{+} $ a pair $(A,\bb)\in X^{mn}\times\R^{m}$ is called $\psi$-Dirichlet improvable if for all $T$ large enough, one can find nonzero integer vectors $\qq\in\Z^{n}$ and $\pp\in\Z^{m}$ such that
\begin{equation}\label{InhDT}
\|A\qq +\bb-\pp\|^m<\psi(T)\quad \text{and}\quad \|\qq\|^n<T.
\end{equation}
Let $\widehat{D}_{m, n}(\psi)$ denote the set of $\psi$-Dirichlet improvable pairs in the unit cube $[0,1]^{mn+m}$. If the inhomogeneous vector $\bb\in\R^m$ is fixed then let $\widehat{D}^{\bb}_{m, n}(\psi)$ be the set of all $A\in X^{mn}$ such that \eqref{InhDT} holds i.e. for a fixed $\bb\in\R^m$ we have $\widehat{D}^\bb_{m, n}(\psi)=\{  A\in X^{mn} :(A,\bb)\in \widehat{D}_{m, n}(\psi)\}.$   

The Lebesgue measure criterion for the set $\widehat{D}_{m, n}(\psi)$ i.e. doubly metric case  has been proved by Kleinbock--Wadleigh \cite{KlWa_19} by reducing the problem to the shrinking target problem on the space of grids in $\R^{m+n}$. The proof of their theorem is based on a correspondence between Diophantine approximation and homogenous dynamics.

\begin{thm}[Kleinbock--Wadleigh, \cite{KlWa_19}]\label{ASKW2} Given a non-increasing $\psi$, the set $\widehat{D}_{m, n}(\psi)$ has zero (respectively full) Lebesgue measure if and only if the series $\sum_j\frac{1}{j^2\psi(j)}$ diverges (respectively converges).
\end{thm}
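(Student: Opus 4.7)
The plan is to exploit the correspondence between inhomogeneous Diophantine approximation and homogeneous dynamics on the space $X$ of unimodular grids (translates of unimodular lattices) in $\R^{m+n}$, reducing the theorem to a shrinking-target problem for the diagonal flow $g_t := \mathrm{diag}(e^{t/m}I_m, e^{-t/n}I_n)$.

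First I would attach to each $(A,\bb)$ the grid
$$\Lambda_{A,\bb} := \{(A\qq+\bb-\pp,\qq) : \pp\in\Z^m,\ \qq\in\Z^n\}.$$
A short calculation shows that \eqref{InhDT} is solvable at $T=e^t$ if and only if $g_t\Lambda_{A,\bb}$ contains a nonzero vector in the box $B_{r(t)} := [-r(t),r(t)]^m\times[-1,1]^n$, where $r(t) := (e^t\psi(e^t))^{1/m}$. Consequently $\widehat{D}_{m,n}(\psi)^c$ is exactly the set of pairs for which $g_t\Lambda_{A,\bb}$ misses the target $E_{r(t)} := \{\Lambda\in X : (\Lambda\setminus\{0\})\cap B_{r(t)}\neq\emptyset\}$ along an unbounded set of times $t$. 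A Siegel--Rogers type integration formula on $X$, combined with a second-moment/Chebyshev estimate, gives the two-sided bound $\mu_X(E_r^c)\asymp\min(1,r^{-m})$ for $r$ large, where $\mu_X$ denotes the Haar probability on $X$. Via equidistribution of the expanded horospherical pieces $g_t\{\Lambda_{A,\bb}:(A,\bb)\in[0,1]^{mn+m}\}$ in $X$, this controls the Lebesgue measure of the ``bad time'' event $\Omega_t := \{(A,\bb) : g_t\Lambda_{A,\bb}\notin E_{r(t)}\}$. With a geometric discretization $t_k := k$, the series $\sum_k\mathrm{Leb}(\Omega_{t_k})$ is comparable to $\sum_j 1/(j^2\psi(j))$ by direct change of variable, using the monotonicity of $\psi$.

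For the \emph{convergent} direction, the easy half of Borel--Cantelli applied to $\{\Omega_{t_k}\}$ yields $\mathrm{Leb}(\widehat{D}_{m,n}(\psi)^c)=0$, and the passage from discrete to continuous time is routine using uniform control of $r(t)$ on each interval $[t_k,t_{k+1}]$. The main technical obstacle lies in the \emph{divergent} direction: the events $\Omega_{t_k}$ are strongly correlated, and one must establish a quasi-independence estimate of the form
$$\bigl|\mathrm{Leb}(\Omega_{t_j}\cap\Omega_{t_k})-\mathrm{Leb}(\Omega_{t_j})\,\mathrm{Leb}(\Omega_{t_k})\bigr|\lesssim e^{-\eta|j-k|}.$$
I would derive such a bound from exponential decay of matrix coefficients for $SL_{m+n}(\R)$ acting on $X$ (Kleinbock--Margulis type estimates), together with non-divergence estimates to handle the excursions of $g_t\Lambda_{A,\bb}$ into the cusp of $X$, and then invoke the divergence form of Borel--Cantelli to conclude $\mathrm{Leb}(\widehat{D}_{m,n}(\psi))=0$. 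Producing the effective mixing estimate uniformly across the shrinking targets $E_{r(t)}$ is the delicate step, since as $r(t)\to 0$ the target sets lie entirely in a shrinking neighborhood of the cusp and require an a priori non-divergence bound for the orbit.
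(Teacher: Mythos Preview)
The paper does not give its own proof of this theorem; it is quoted from Kleinbock--Wadleigh \cite{KlWa_19} as background, with the one-sentence description that the result is obtained ``by reducing the problem to the shrinking target problem on the space of grids in $\R^{m+n}$'' via the correspondence between Diophantine approximation and homogeneous dynamics. Your sketch is precisely that strategy: associate to $(A,\bb)$ the grid $\Lambda_{A,\bb}$, rephrase $\psi$-Dirichlet non-improvability as a cusp-excursion (shrinking target) statement for $a_t\Lambda_{A,\bb}$, estimate the Haar measure of the target, push this back to Lebesgue measure on $[0,1]^{mn+m}$ via equidistribution of expanding horospherical translates, and then run both halves of Borel--Cantelli, with quasi-independence in the divergence case supplied by effective mixing of the $a_t$-action (Kleinbock--Margulis). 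So there is nothing to compare against in the present paper, and your outline is faithful to the cited source.

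Two small corrections to your write-up. First, the shrinking target you actually hit in the limsup is $E_{r(t)}^c$, not $E_{r(t)}$; it is the \emph{complement} whose measure you need to control and smooth for the mixing argument. Second, in the regime where the dichotomy is nontrivial one has $T\psi(T)\to\infty$, hence $r(t)\to\infty$, not $r(t)\to 0$; it is as $r(t)\to\infty$ that $E_{r(t)}^c$ retreats into the cusp and the non-divergence input becomes necessary. With these sign fixes your plan is correct.
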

Recently (2022), Kim--Kim \cite{KiKi_20} established the Hausdorff measure analogue of Theorem \ref{ASKW2}.
\begin{thm}[Kim--Kim, \cite{KiKi_20}] \label{KiKi1}Let $\psi$ be non-increasing with $\lim_{T\to\infty}\psi(T)=0$ and $0 \leq s\leq mn+m.$ Then
\begin{equation*} 
\mathcal{H}^{s}(  \widehat{D}_{m, n}(\psi)^{c}       )=
\begin{cases}
0\  & \text{ if }\quad \sum\limits_{q=1}^{\infty} \frac{1}{\psi(q)q^{2}} \left( \frac{ q^{\frac{1}{n}}}{\psi(q)^\frac{1}{m}}\right)^{mn+m-s}
\,<\,\infty ; \\[2ex] 
\mathcal{H}^{s}([0,1]^{mn+m}) \  & \text{ if }\quad \sum\limits_{q=1}^{\infty} \frac{1}{\psi(q)q^{2}} \left( \frac{ q^{\frac{1}{n}}}{\psi(q)^\frac{1}{m}}\right)^{mn+m-s}
\,=\,\infty .
\end{cases}
\end{equation*}
\end{thm}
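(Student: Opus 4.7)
The strategy has three steps: (i) reduce $\widehat{D}_{m,n}(\psi)^c$, modulo a Lebesgue null set, to a concrete limsup structure; (ii) use a direct covering for the convergence direction; (iii) transfer Theorem \ref{ASKW2} via the Mass Transference Principle for the divergence direction.

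For step (i), I would follow the Kleinbock--Wadleigh reduction via Dani's correspondence between inhomogeneous Diophantine approximation and diagonal flows on the space of unimodular affine lattices in $\R^{m+n}$. Failure of the $\psi$-Dirichlet condition at scale $T$ for $(A,\bb)$ is equivalent to the orbit of the associated affine lattice avoiding a cusp target, which in turn (by a geometry-of-numbers argument) forces the existence of a short integer witness vector at scale $q\asymp T^{1/n}$ with error $\asymp\psi(T)^{1/m}$. Up to null sets, this identifies
$$\widehat{D}_{m,n}(\psi)^c \;=\; \limsup_{q\to\infty} E_q \;\subset\; [0,1]^{mn+m},$$
where each $E_q$ is a finite union of $N_q$ balls (or rectangular neighbourhoods of rational affine subspaces) at the natural scale $r_q\asymp\psi(q)^{1/m}/q^{1/n}$, with $N_q\, r_q^{mn+m}\asymp 1/(q^2\psi(q))$, so that Theorem \ref{ASKW2} is precisely the Borel--Cantelli dichotomy on this limsup.

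For step (ii), the natural cover $\limsup E_q\subset\bigcup_{q\ge Q}E_q$ gives
$$\mathcal{H}^s\!\left(\widehat{D}_{m,n}(\psi)^c\right) \;\le\; \liminf_{Q\to\infty}\sum_{q\ge Q} N_q\, r_q^s \;\asymp\; \liminf_{Q\to\infty}\sum_{q\ge Q}\frac{1}{q^2\psi(q)}\left(\frac{q^{1/n}}{\psi(q)^{1/m}}\right)^{mn+m-s},$$
which vanishes under the convergence hypothesis.

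For step (iii), I would invoke the Mass Transference Principle of Beresnevich--Velani (or its rectangular extension due to Wang--Wu/Allen--Baker, if the sets $E_q$ are anisotropic) with exponent $s$ and ambient dimension $k=mn+m$. MTP converts a full-Lebesgue statement for the blown-up limsup $\limsup B(x_q, r_q^{s/k})$ into the desired full-$\mathcal{H}^s$ statement for $\limsup E_q$. The blown-up limsup coincides, modulo a null set, with $\widehat{D}_{m,n}(\widetilde\psi)^c$ for a modified $\widetilde\psi$ satisfying $\widetilde\psi(q)^{1/m}/q^{1/n}=r_q^{s/k}$; Theorem \ref{ASKW2} applied to $\widetilde\psi$ supplies the required full-Lebesgue input, and one checks directly that the condition $\sum 1/(q^2\widetilde\psi(q))=\infty$ is equivalent to the divergence hypothesis of Theorem \ref{KiKi1}.

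The principal obstacle is step (i): translating the \emph{for every} $q$ quantifier in the Dirichlet non-improvability condition into a \emph{there exists} $q$ resonance condition, with precise matching of witness radii and multiplicities. The witness neighbourhoods produced by the Dani-correspondence argument are typically anisotropic rectangles rather than round balls, so one must either appeal to a rectangular MTP or perform a careful inscribed/circumscribed-ball reduction. Capturing the limsup structure \emph{on the nose}, rather than merely between an upper and a lower bound, is essential in order to inherit the Kleinbock--Wadleigh Borel--Cantelli input for both the covering and the MTP step.
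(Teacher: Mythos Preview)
Your convergence outline is essentially the paper's argument: the Dani correspondence (Lemma~\ref{KKL2.3}) places $\widehat{D}_{m,n}(\psi)^c$ inside $\limsup_{t}\bigl(Z_t\times[0,1]^m\bigr)$, Proposition~\ref{KKP3.6} covers $Z_t$ by $\asymp e^{(m+n)(t-z_\psi(t))}$ balls of radius $\asymp e^{-(m+n)t/(mn)}$, and the change of variables in Lemma~\ref{BS1} turns the $f$-cost of this cover into the stated series before invoking Hausdorff--Cantelli. Your bookkeeping $N_q\,r_q^{mn+m}\asymp 1/(q^2\psi(q))$ is exactly this, reindexed.

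The divergence step has a genuine gap. Your step~(i) needs an \emph{equality} $\widehat{D}_{m,n}(\psi)^c=\limsup_q E_q$ with the $E_q$ unions of balls, and moreover one that is \emph{self-similar under radius rescaling} so that the blown-up limsup is again a Dirichlet non-improvable set $\widehat{D}_{m,n}(\widetilde\psi)^c$ to which Theorem~\ref{ASKW2} applies. No such structure exists. Dirichlet non-improvability is a ``no short grid vector'' condition; the Dani picture only yields the \emph{upper} inclusion into $\limsup_t Z_t\times[0,1]^m$ (via the $d$-th successive minimum of the \emph{homogeneous} lattice $a_t\Lambda_A$), while every known \emph{lower} inclusion comes from a different object, namely Cassels' transference (Lemma~\ref{Transf}), which embeds $\limsup_S\widehat{W}_{S,\varepsilon}$ into $\widehat{D}_{m,n}(\psi)^c$. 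These two limsup families are not the same, and neither rescales to a set of the form $\widehat{D}_{m,n}(\widetilde\psi)^c$; so the MTP bootstrap with Theorem~\ref{ASKW2} as Lebesgue input cannot be closed. You correctly flag step~(i) as the obstacle, but the proposed resolution does not survive it.

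What the paper actually does for divergence bypasses $\widehat{D}_{m,n}(\psi)^c$ entirely and works on the transference subset. The projection $\limsup_S W_{S,\varepsilon}$ is precisely $\{A:{}^tA\in W_{n,m}(\Psi_\varepsilon)\}$; Lemma~\ref{BS4.2} shows the divergence hypothesis is equivalent to the Jarn\'ik divergence condition for $W_{n,m}(\Psi_\varepsilon)$, and the Allen--Beresnevich construction (Lemma~\ref{AlBe}, itself the MTP for systems of linear forms) produces a Frostman measure $\mu$ on $\limsup_S W_{S,\varepsilon}$. One then takes $\nu=\mu\times m_{\R^m}$, checks $\nu\bigl(\bigcup_{S\ge N}\widehat{W}_{S,\varepsilon}\bigr)\ge 1-2\varepsilon$, and feeds $\nu$ into the Mass Distribution Principle on $\limsup_S\widehat{W}_{S,\varepsilon}\subseteq\widehat{D}_{m,n}(\psi)^c$. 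Thus MTP does appear, but for the \emph{dual} $\psi$-approximable problem with Khintchine--Groshev as the Lebesgue input --- not for $\widehat{D}_{m,n}(\psi)^c$ itself, and not via Theorem~\ref{ASKW2}.
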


 In the same article Kim--Kim also provided the Hausdorff measure criterion for the singly metric case. 
\begin{thm}[Kim--Kim, \cite{KiKi_20}] \label{KiKi2}Let $\psi$ be non-increasing with $\lim_{T\to\infty}\psi(T)=0.$ Then for any $0\leq s\leq mn$ 
\begin{equation*} 
\mathcal{H}^{s}( \widehat{D}^{\bb}_{m, n}(\psi) ^{c}      )=
\begin{cases}
0\  & \text{ if }\quad \sum\limits_{q=1}^{\infty} \frac{1}{\psi(q)q^{2}} \left( \frac{ q^{\frac{1}{n}}}{\psi(q)^\frac{1}{m}}\right)^{mn-s}
\,<\,\infty ; \\[2ex] 
\mathcal{H}^{s}([0,1]^{mn})  & \text{ if } \quad \sum\limits_{q=1}^{\infty} \frac{1}{\psi(q)q^{2}} \left( \frac{ q^{\frac{1}{n}}}{\psi(q)^\frac{1}{m}}\right)^{mn-s}
\,=\,\infty,
\end{cases}
\end{equation*}
for every $\bb\in \R^{m}\setminus \Z^{m}.$
\end{thm}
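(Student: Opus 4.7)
My approach mirrors the Kim--Kim strategy underlying Theorem~\ref{KiKi2}: I would reformulate $\widehat{D}^{\bb}_{m, n}(\psi)^c$ as a $\limsup$ set of resonant neighborhoods, directly cover it to handle the convergence case, and invoke the Beresnevich--Velani mass transference principle (MTP) for the divergence case, using the Kleinbock--Wadleigh Lebesgue measure criterion (the singly metric analogue of Theorem~\ref{ASKW2}) as the seed.

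\emph{Limsup reformulation.} Taking a lacunary sequence such as $T_k = 2^k$ and exploiting the monotonicity of $\psi$, one shows that, up to a harmless constant in $\psi$, $A \in \widehat{D}^{\bb}_{m, n}(\psi)^c$ if and only if there are infinitely many $k$ for which no pair $(\qq, \pp) \in (\Z^n \setminus \{\mathbf{0}\}) \times \Z^m$ with $\|\qq\|^n < T_k$ satisfies $\|A\qq + \bb - \pp\|^m < \psi(T_k)$. For each $k$, the solvable set $U_k \subset [0,1]^{mn}$ is a union of $O(T_k^{1+m/n})$ slabs, each a tube of thickness $\psi(T_k)^{1/m}/\|\qq\|$ around a codimension-$m$ affine subspace $\{A : A\qq + \bb = \pp\}$. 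Hence $\widehat{D}^{\bb}_{m, n}(\psi)^c = \limsup_k F_k$, where $F_k = [0,1]^{mn} \setminus U_k$.

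\emph{Convergence case.} When the series in Theorem~\ref{KiKi2} converges, I would cover each $F_k$ by Euclidean balls adapted to the slab arrangement at the natural scale $r_k = \psi(T_k)^{1/m}/T_k^{1/n}$. The gaps between slabs have a controlled shape---roughly parallelepipeds with some directions of size $\sim r_k$ and others governed by the lattice combinatorics of the $(\qq, \pp)$ indexing---so a careful count shows that the total $s$-Hausdorff weight of this cover matches the $k$-th dyadic block of the stated series. The Hausdorff--Cantelli lemma then yields $\mathcal{H}^s(\widehat{D}^{\bb}_{m, n}(\psi)^c) = 0$.

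\emph{Divergence case and main obstacle.} When the series diverges, the plan is to inscribe in each $F_k$ a family of balls of radius $\sim r_k$ whose union forms a sub-$\limsup$ $L \subset \widehat{D}^{\bb}_{m, n}(\psi)^c$ still having full Lebesgue measure. (The singly metric Lebesgue version of Theorem~\ref{ASKW2} needed as input is derivable by a Fubini/fiber argument, valid precisely because $\bb \in \R^m \setminus \Z^m$ rules out the trivial collapse to the homogeneous Dirichlet-improvable set obtained when $\bb \in \Z^m$.) The Beresnevich--Velani MTP then upgrades Lebesgue full measure for $L$ to full $\mathcal{H}^s$-measure, giving the result. The main obstacle is precisely this ball-extraction: $F_k$ is the complement of a union of slabs, not a union of balls, so one must carefully inscribe a ball of radius $\sim r_k$ in each ``gap'' region and verify that the inscribed balls are numerous enough and positioned generically enough for their $\limsup$ to capture $\widehat{D}^{\bb}_{m, n}(\psi)^c$ up to a null set. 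Replicating the Kim--Kim geometric gap analysis---and in particular accommodating the fixed inhomogeneous vector $\bb$, where no averaging in $\bb$ is available---is the technical heart of the argument.
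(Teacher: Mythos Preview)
Your divergence plan misses the idea that actually drives both Kim--Kim and this paper. The set $\widehat{D}^{\bb}_{m,n}(\psi)^c$ is, as you say, a $\limsup$ of \emph{complements} of slab-unions, and neither the mass transference principle nor ubiquity applies to such objects. Rather than attempt the ball-extraction you describe, the paper bypasses the obstacle entirely via Cassels' \emph{transference lemma} (Lemma~\ref{Transf}): if ${}^{t}A$ admits infinitely many solutions to $\|{}^{t}A\xx\|_{\Z} < d^{-1}|\bb\cdot\xx|_{\Z}\,\widetilde\psi(S)$, $\|\xx\| < d^{-1}|\bb\cdot\xx|_{\Z}\,S$, then $(A,\bb)\in\widehat{D}_{m,n}(\psi)^c$. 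This produces an honest $\limsup$ of resonant neighbourhoods $\Delta(R_{(\xx,\yy)},\Psi(\|\xx\|))$ with $R_{(\xx,\yy)}=\{A:{}^{t}A\xx=\yy\}$, indexed over $(\xx,\yy)$ with the constraint $|\bb\cdot\xx|_{\Z}>\varepsilon_0$, and local ubiquity (Theorems~\ref{BDV06} and~\ref{KKTh4.5}) then applies directly. The hypothesis $\bb\notin\Z^m$ enters not through any Fubini slicing but by guaranteeing that enough integer vectors $\xx$ survive the constraint $|\bb\cdot\xx|_{\Z}>\varepsilon_0$; this is the content of \cite[Lemma~4.4]{KiKi_20}, encoded in the choice \eqref{Ubiq1}. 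Your Fubini remark is in fact wrong: slicing the doubly metric statement yields the singly metric Lebesgue result only for \emph{almost every} $\bb$, not for every $\bb\in\R^m\setminus\Z^m$, so it cannot serve as the seed you want.

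On the convergence side your direct covering of $F_k$ is a plausible programme but is again not what is done here. The paper routes through the Dani correspondence (Lemma~\ref{KKL2.3}), observes that $\Delta(a_t\Lambda_{A,\bb})\le \log(d\,\lambda_d(a_t\Lambda_A))$ for every $\bb$, and then invokes Kim--Kim's covering of the homogeneous set $Z_t=\{A:\log(d\,\lambda_d(a_t\Lambda_A))\ge z_\psi(t)-C_0\}$ by $\asymp e^{(m+n)(t-z_\psi(t))}$ balls of radius $\tfrac12 e^{-(1/m+1/n)t}$ (Proposition~\ref{KKP3.6}). The slab combinatorics you allude to are thus replaced by a single successive-minima estimate, after which Lemma~\ref{BS1} matches the resulting series to the one in the statement and the Hausdorff--Cantelli Lemma~\ref{CB} finishes.
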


Naturally one can ask about the generalization of Theorem \ref{KiKi1} and Theorem \ref{KiKi2} in terms of $f$-dimensional Hausdorff measure. Recall that a natural generalization of the $s$-dimensional Hausdorff measure $\mathcal{H}^{s}$ is the $f$-dimensional Hausdorff measure $\mathcal{H}^{f}$ where $f$ is a dimension function, that is an increasing, continuous function $f:\R_{+}\to\R_{+}$ such that $f(r)\to 0$ as $r\to 0.$

In this article we extend the results of Kim--Kim \cite{KiKi_20} by establishing the zero-full law for the sets $\widehat{D}_{m,n}(\psi)$ and $\widehat{D}^{\bb}_{m,n}(\psi)$ in terms of generalised $f$-dimensional Hausdorff measure.
We obtain the following main results.
{\begin{thm}\label{BST1} Let $\psi$ be non-increasing and $f$ be a dimension function with
\begin{equation}\label{C1}f(xy)\asymp x^{s}f(y) \quad \forall \quad y^{\alpha}\leq x\leq y^{\frac{1}{\alpha}}\end{equation} 
where $mn+m-n<s<mn+m$ and $\alpha>1$ is some absolute constant independent of $x$ and $y$ and  suppose that  
\begin{equation}\label{C2}
f'(x)= a(x){\frac{f(x)}{x}}
\end{equation}
such that $a(x)\to s$ as $x\to0$. Further, let

\begin{equation}\label{C05}
(q^{-\frac{m}{n}})^\alpha\leq \psi(q)\leq (q^{-\frac{m}{n}})^{\frac{1}{\alpha}}.
\end{equation}

 
Then
\begin{equation*} 
\mathcal{H}^{f}(  \widehat{D}_{m, n}(\psi) ^{c}      )=
\begin{cases}
0\  & \text{ if }\quad \sum\limits_{q=1}^{\infty} \frac{1}{\psi(q)q^{2}} \left( \frac{ q^{\frac{1}{n}}}{\psi(q)^\frac{1}{m}}\right)^{mn+m}     f\left( \frac{\psi(q)^\frac{1}{m}}{ q^{\frac{1}{n}}}\right)    
\,<\,\infty ; \\[2ex] 
\mathcal{H}^{f}([0,1]^{mn+m}) \  & \text{ if }\quad \sum\limits_{q=1}^{\infty} \frac{1}{\psi(q)q^{2}} \left( \frac{ q^{\frac{1}{n}}}{\psi(q)^\frac{1}{m}}\right)^{mn+m} f\left( \frac{\psi(q)^\frac{1}{m}}{ q^{\frac{1}{n}}}\right)    \,=\,\infty .
\end{cases}
\end{equation*}
\end{thm}

For the singly metric case we have the following result.
\begin{thm}\label{BST2} Let $\psi$ be non-increasing and $f$ be a dimension function such that $r^{-mn} f(r) \to \infty$ as $r\to 0.$ Suppose that \eqref{C1} -- \eqref{C05} holds and $mn-n<s<mn.$ 
Then
\begin{equation*} 
\mathcal{H}^{f}(\widehat{D}^{\bb}_{m, n}(\psi)^{c})=
\begin{cases}
0\  & \text{ if }\quad \sum\limits_{q=1}^{\infty} \frac{1}{\psi(q)q^{2}} \left( \frac{ q^{\frac{1}{n}}}{\psi(q)^\frac{1}{m}}\right)^{mn}     f\left( \frac{\psi(q)^\frac{1}{m}}{ q^{\frac{1}{n}}}\right)    
\,<\,\infty ; \\[2ex] 
\mathcal{H}^{f}([0,1]^{mn}) \  & \text{ if }\quad \sum\limits_{q=1}^{\infty} \frac{1}{\psi(q)q^{2}} \left( \frac{ q^{\frac{1}{n}}}{\psi(q)^\frac{1}{m}}\right)^{mn} f\left( \frac{\psi(q)^\frac{1}{m}}{ q^{\frac{1}{n}}}\right)    \,=\,\infty 
\end{cases}
\end{equation*}
for every $\bb\in \R^{m}\setminus \Z^{m}.$
\end{thm}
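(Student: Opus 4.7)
The plan is to establish a limsup-set representation for $\widehat{D}^\bb_{m,n}(\psi)^c$, then attack the convergence half by a direct Hausdorff--Cantelli covering argument and the divergence half via a mass transference principle, exploiting the known $s$-dimensional result of Kim--Kim (Theorem \ref{KiKi2}).

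First, I would establish the reduction to a limsup set. Following the framework of \cite{KiKi_20}, $A\in[0,1]^{mn}$ fails to be $\psi$-Dirichlet improvable iff, for arbitrarily large $T$, no nonzero integer pair $(\pp,\qq)$ satisfies both inequalities in \eqref{InhDT}. Discretising $T$ along a geometric sequence $T_k = 2^k$ and invoking the pigeonhole argument behind Dirichlet's theorem, one shows (up to an $\mathcal{H}^f$-null set) that $\widehat{D}^\bb_{m,n}(\psi)^c$ coincides with a limsup of sets $E_q$, where each $E_q$ is the union, over $\qq\in\Z^n$ with $\|\qq\|^n\asymp q$ and $\pp\in\Z^m$, of slabs $\{A\in[0,1]^{mn}:\|A\qq+\bb-\pp\|<\psi(q)^{1/m}\}$. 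The hypothesis $\bb\notin\Z^m$ is used to rule out the trivial contribution $\qq=\0$.

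For the convergence part, each slab is a neighbourhood of an affine hyperplane with transverse thickness $\asymp\psi(q)^{1/m}/\|\qq\|\asymp\psi(q)^{1/m}/q^{1/n}$. Covering it by cubes of that side length produces about $(q^{1/n}/\psi(q)^{1/m})^{mn-n}$ cubes per slab, and there are $\asymp q$ admissible $\qq$ at scale $q$. Summing $f(r_q)$ over this cover reproduces exactly the series in the statement; the Hausdorff--Cantelli lemma, together with the scaling control provided by \eqref{C1}--\eqref{C2}, then gives $\mathcal{H}^f(\widehat{D}^\bb_{m,n}(\psi)^c)=0$ on convergence.

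For the divergence part, I would apply a mass transference principle for neighbourhoods of affine subspaces (in the spirit of Beresnevich--Velani) to upgrade the $s$-dimensional full-measure conclusion in Theorem \ref{KiKi2} to its $f$-dimensional analogue. The regularity hypotheses \eqref{C1} and \eqref{C2} on $f$ are precisely what permits $f$ to replace $r\mapsto r^s$ in the MTP: \eqref{C1} supplies doubling-like comparisons of $f$-measures across scales, while \eqref{C2} pins down the effective dimension $s$. Translation invariance of the limsup set under $\Z^{mn}$ and a standard zero-one law then upgrade positive $f$-measure to the full value $\mathcal{H}^f([0,1]^{mn})$.

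The principal obstacle is the divergence direction: the slabs $E_q$ are rectangular neighbourhoods of affine subspaces rather than Euclidean balls, so the original ball-to-ball mass transference principle does not apply. One must invoke the affine-subspace version and verify its ubiquity / non-concentration hypotheses in the presence of the inhomogeneous shift $\bb$; the condition $mn-n<s<mn$ in the theorem is dictated by the dimension of these affine subspaces. A secondary point is that the exact correspondence between the limsup structure at scale $q$ and the summand in the series relies on carefully pairing each $q$ with the $\qq$'s of norm $\asymp q^{1/n}$ and using $\bb\notin\Z^m$ to avoid degeneracies at small $q$.
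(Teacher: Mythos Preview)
Your reduction in the first step is not correct, and both halves of the argument rest on it. You assert that $\widehat{D}^{\bb}_{m,n}(\psi)^c$ coincides (up to a null set) with $\limsup_q E_q$, where $E_q$ is the union of slabs $\{A:\|A\qq+\bb-\pp\|<\psi(q)^{1/m}\}$ over $\|\qq\|^n\asymp q$. But membership in such a slab means that the system \eqref{InhDT} \emph{does} have a solution at the scale $q$; hence $\limsup_q E_q$ is an inhomogeneous $\psi$-\emph{approximable} set, not the Dirichlet non-improvable set. By definition $A\in\widehat{D}^{\bb}_{m,n}(\psi)^c$ iff for arbitrarily large $T$ the system has \emph{no} solution with $\|\qq\|^n<T$, so $\widehat{D}^{\bb}_{m,n}(\psi)^c$ is a $\limsup$ of \emph{complements} of nested unions of slabs. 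Such complements admit no useful cube covering of the type you describe, so your Hausdorff--Cantelli computation is bounding the wrong set. (Even for the slabs themselves the count is off: the condition $\|A\qq+\bb-\pp\|<\psi(q)^{1/m}$ imposes $m$ scalar constraints, so each slab has codimension $m$, giving $(q^{1/n}/\psi(q)^{1/m})^{mn-m}$ cubes rather than $mn-n$; the exponent $mn-n$ in the theorem's range for $s$ comes from the \emph{dual} problem, where the resonant sets are $(m-1)n$-dimensional.)

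The paper circumvents this obstruction in two different ways. For the convergence half it invokes the Dani correspondence (Lemma~\ref{KKL2.3}) to embed $\widehat{D}^{\bb}_{m,n}(\psi)^c$ into $\limsup_t Z_t$, where $Z_t$ is defined via the $d$-th successive minimum of the lattice $a_t\Lambda_A$; the covering of $Z_t$ by $\asymp e^{(m+n)(t-z_\psi(t))}$ balls of radius $\tfrac12 e^{-(1/m+1/n)t}$ is supplied by Kim--Kim's Proposition~\ref{KKP3.6}, and Lemma~\ref{BS1} (which is where \eqref{C1}--\eqref{C2} are actually used) matches the resulting $f$-cost with the series in the statement. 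For the divergence half the paper passes to the dual via Cassels' transference lemma (Lemma~\ref{Transf}), obtaining $W_{\bb,\varepsilon}\subseteq\widehat{D}^{\bb}_{m,n}(\psi)^c$ where $W_{\bb,\varepsilon}$ \emph{is} a genuine $\limsup$ set of neighbourhoods of $(m-1)n$-dimensional resonant planes in the variable ${}^tA$; it then feeds Kim--Kim's local ubiquity (Theorem~\ref{KKTh4.5}) into the Beresnevich--Velani theorem (Theorem~\ref{BDV06}), after the series conversion of Lemma~\ref{BS4.2}. Neither a direct slab covering nor a bare mass-transference upgrade of Theorem~\ref{KiKi2} can replace these two passages.
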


We remark that the conditions \eqref{C1} and \eqref{C2} are satisfied in a wide variety of cases, for example $f(x) = x^s \log^t(x)$ for some $s > 0$ and $t\in\R$. Indeed, \eqref{C1} follows since $f(xy) = (xy)^s \log^t(xy) \asymp x^s y^s \log^t(y) = x^s f(y)$, and \eqref{C2} follows since
$$
\frac{x f'(x)}{f(x)} = x \frac{d}{dx}[s\log(x) + t \log\log(x)] = x\left(\frac {s}{x} + \frac t{x\log(x)}\right) \to s \text{ as }{x\to 0}.
$$

\noindent\textbf{Acknowledgements.}
{The research of first-named author was supported by the Australian Mathematical Society Lift-off Fellowship and  the Australian Research Council Grant
DP180100201. The second-named author was supported by a Royal Society University Research Fellowship, URF$\backslash$R1$\backslash$180649. The authors would like to thank the anonymous reviewer for the careful reading of the paper which has led to multiple improvements.}

\section{Preliminaries and auxiliary results}
\subsection{ Hausdorff measure and dimension}
Let $f:~{\R}_+\rightarrow~{\R}_+$  be a dimension function i.e. an increasing continuous function such that $f(r)\to 0$ as $r\to~0$ and let $\mathcal V$ be an arbitrary subset of $\R^n.$ For $\rho>0,$ a  $\rho$-cover for a set $\mathcal V$ is defined as a countable collection $\{U_i\}_{i\geq1}$ of sets in
$\R^n$ with diameters $0<\mathrm{diam} (U_i)\le \rho$ such that
$\mathcal V\subseteq \bigcup_{i=1}^{\infty} U_i$. 
Then for each $\rho>0$ define
\begin{equation*}
\mathcal H_{\rho}^f(\mathcal V)=\inf \left\{\sum_{i=1}^{\infty} f\big(\mathrm{diam}(U_i)\big):    \{U_{i}\}   {\text { is a $\rho$ -cover of }} \mathcal V      \right\}.    \end{equation*}
 Note that 
 $\mathcal H_\rho^f(\mathcal V)$ is 
 non-decreasing as $\rho$ decreases and therefore approaches a limit as $\rho\to0$. Accordingly, the $f$-dimensional Hausdorff measure of $\mathcal V$ is defined as
 \begin{equation*} 
\mathcal H^f(\mathcal V):=\lim_{\rho\to 0}\mathcal H_\rho^f(\mathcal V).
\end{equation*}
This limit could be zero or infinity, or take a finite positive value. 

 If $f(r)=r^s$ where $s > 0$,  then $\mathcal H^{f}$ is the $s$-dimensional Hausdorff measure and is represented by  $\mathcal H^s.$ 
It can be easily verified that Hausdorff measure is monotonic, that is, if $E$ is contained in $F$ then $\mathcal H^{s}(E)\leq \mathcal H^{s}(F)$, countably sub-additive, and satisfies $\mathcal H^s(\emptyset)=0$. 

The following property 
\begin{equation*}
\mathcal H^{s}(\mathcal V)<\infty \implies \mathcal H^{s'}(\mathcal V)=0 \quad {\text{ if }} s'>s,
\end{equation*}
implies that there is a unique real point $s$ at which the Hasudorff $s$-measure drops from infinity to zero (unless $\mathcal V $ is finite so that $\mathcal H^{s}(\mathcal V)$ is never infinite). 
The value taken by $s$ at this discontinuity is referred to as the \textit{Hausdorff dimension} of a set $\mathcal V$ and is defined as

\begin{equation*}
\dim _{\mathrm{H}} \mathcal V:=\inf\{s > 0:\; \mathcal H^s(\mathcal V)=0\}.
\end{equation*}

For establishing the convergent part of Theorem \ref{BST1} and Theorem \ref{BST2} we will apply the following Hausdorff measure version of the famous Borel--Cantelli lemma \cite[Lemma 3.10]{BeDo_99}:

\begin{lem}\label{CB}
Let $\{B_{i}\}_{i\geq1}$ be a sequence of measurable sets in $\R^{n}$ and suppose that for some dimension function $f,$ $\sum_{i}f(\mathrm {diam}(B_{i}))<\infty.$ Then
$\mathcal{H}^{f}( \limsup_{i\to\infty}  B_{i}    )=0.$
\end{lem}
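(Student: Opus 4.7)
The statement is the Hausdorff-measure analogue of the first Borel--Cantelli lemma, and the proof is a short and standard covering argument. The plan is to use the natural covers of $\limsup_i B_i$ by tail-unions of the $B_i$ themselves, and to extract smallness of $\mathcal{H}^f_\rho$ from the convergence of the series.

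First I would recall that
\begin{equation*}
\limsup_{i\to\infty} B_i \;=\; \bigcap_{N=1}^{\infty}\bigcup_{i\geq N} B_i \;\subseteq\; \bigcup_{i\geq N} B_i \quad \text{for every } N.
\end{equation*}
So for each $N$, the family $\{B_i\}_{i\geq N}$ is a countable cover of $\limsup_i B_i$. To turn this into a $\rho$-cover for arbitrary $\rho>0$, I need $\mathrm{diam}(B_i)\leq \rho$ for $i\geq N$. This follows from the convergence of $\sum_i f(\mathrm{diam}(B_i))$: indeed $f(\mathrm{diam}(B_i))\to 0$, and since $f$ is a dimension function (continuous, increasing, with $f(r)\to 0$ iff $r\to 0$), this forces $\mathrm{diam}(B_i)\to 0$. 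Hence for every $\rho>0$ there exists $N(\rho)$ with $\mathrm{diam}(B_i)\leq \rho$ for all $i\geq N(\rho)$, and $N(\rho)\to\infty$ as $\rho\to 0$.

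Using $\{B_i\}_{i\geq N(\rho)}$ as a $\rho$-cover of $\limsup_i B_i$ in the definition of $\mathcal{H}^f_\rho$, I would conclude
\begin{equation*}
\mathcal{H}^f_\rho\bigl(\limsup_{i\to\infty} B_i\bigr) \;\leq\; \sum_{i\geq N(\rho)} f(\mathrm{diam}(B_i)).
\end{equation*}
The right-hand side is the tail of a convergent series, so it tends to $0$ as $\rho\to 0$. Taking $\rho\to 0$ in the definition of the $f$-dimensional Hausdorff measure then gives $\mathcal{H}^f(\limsup_i B_i)=0$, as required.

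There is no real obstacle here; the only minor point to be careful about is the implication $f(\mathrm{diam}(B_i))\to 0 \Rightarrow \mathrm{diam}(B_i)\to 0$, which uses monotonicity and positivity of a dimension function on $(0,\infty)$. Measurability of the $B_i$ plays no role, as Hausdorff measure is defined via outer coverings.
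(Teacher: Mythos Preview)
Your proof is correct and is exactly the standard covering argument for the Hausdorff--Cantelli lemma. The paper does not give its own proof of this lemma; it simply quotes it from \cite[Lemma~3.10]{BeDo_99}, so there is nothing to compare against beyond noting that your argument is the one found in that reference.
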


We will use the following principle known as Mass Distribution Principle \cite[\S 4.1]{Fa_14} for the divergent part of Theorem \ref{BST1}.

\begin{lem}\label{MDP} Let $\mu$ be a probability measure supported on a subset $\mathcal V$ of $\R^{k}.$ Suppose there are positive constants $c>0$ and $\varepsilon>0$ such that 
\begin{equation*}
\mu(U)\leq c f(\mathrm {diam}(U))
\end{equation*} 
for all sets $U$ with $\mathrm{diam}(U)\leq\varepsilon.$ Then $\mathcal{H}^{f}(\mathcal V)\geq \mu(\mathcal V)/c.$\end{lem}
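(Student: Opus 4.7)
The plan is a direct unpacking of the definition of $\mathcal H^f_\rho$ combined with the hypothesis on $\mu$. The intuition is that if $\mu$ assigns only small mass to small-diameter sets in the prescribed way, then any economical cover of $\mathcal V$ must contain enough total $f$-weighted diameter to accommodate the entire mass $\mu(\mathcal V)$, and this forces the infimal $f$-sum in the definition of $\mathcal H^f$ to be at least $\mu(\mathcal V)/c$.

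Concretely, I would fix $\rho \in (0, \varepsilon]$ and let $\{U_i\}_{i \geq 1}$ be an arbitrary $\rho$-cover of $\mathcal V$. Since $\mathcal V \subseteq \bigcup_i U_i$ and $\mu$ is a probability measure on $\R^k$, countable subadditivity yields
\[
\mu(\mathcal V) \;\leq\; \sum_{i=1}^{\infty} \mu(U_i).
\]
For each $i$, the bound $\mathrm{diam}(U_i) \leq \rho \leq \varepsilon$ together with the standing hypothesis gives $\mu(U_i) \leq c\, f(\mathrm{diam}(U_i))$, so
\[
\mu(\mathcal V) \;\leq\; c \sum_{i=1}^{\infty} f(\mathrm{diam}(U_i)).
\]
Taking the infimum over all $\rho$-covers of $\mathcal V$ produces $\mu(\mathcal V) \leq c\, \mathcal H^f_\rho(\mathcal V)$. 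Since the left side is independent of $\rho$, letting $\rho \to 0$ yields $\mu(\mathcal V) \leq c\, \mathcal H^f(\mathcal V)$, which rearranges to the claimed inequality $\mathcal H^f(\mathcal V) \geq \mu(\mathcal V)/c$.

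The only subtle point is that members $U_i$ of a $\rho$-cover are not required to be Borel, so $\mu(U_i)$ must be interpreted either as an outer-measure value or by passing to a measurable enlargement of each $U_i$. This is easily resolved in either of two equivalent ways: one may replace each $U_i$ by its closure $\overline{U_i}$, which is Borel, has the same diameter (so the $f$-sum is unchanged by continuity of $f$), and still covers $\mathcal V$; alternatively one extends $\mu$ to its outer measure $\mu^{\ast}$ and applies outer-measure subadditivity, noting that the hypothesis $\mu(U) \leq cf(\mathrm{diam}(U))$ is naturally formulated for arbitrary sets $U$ in any case. There is no substantive obstacle here; the lemma is essentially a rearrangement of the definitions, and the real work in any application of Lemma \ref{MDP} (for example in the divergence halves of Theorems \ref{BST1} and \ref{BST2}) lies in constructing the auxiliary probability measure $\mu$ satisfying the local mass bound, not in the proof of the lemma itself.
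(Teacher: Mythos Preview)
Your proof is correct and is exactly the standard argument for the Mass Distribution Principle. Note that the paper does not supply its own proof of this lemma: it is quoted as a known result with a reference to Falconer \cite[\S 4.1]{Fa_14}, so there is nothing to compare beyond observing that your argument is the one found in that reference.
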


\begin{thm}[{\cite[Theorem 2]{AlBe_18}}]\label{AlBeJT}
Let $\psi:\N \to \R_{+}$ be any approximating function and let $mn>1.$ Let $f$ and $g:r\to g(r):=r^{-m(n-1)}f(r)$ be dimension functions such that $r \mapsto  r^{-mn}f(r)$ is monotonic. Then
\begin{equation*} 
\mathcal{H}^{f}(  {W}_{m, n}(\psi)      )=
\begin{cases}
0\  & \text{ if }\quad \sum\limits_{q=1}^{\infty} q^{m+n-1}g\left( \frac{\widehat{\psi}(q)}{q}\right)
\,<\,\infty ; \\[2ex] 
\mathcal{H}^{f}([0,1]^{mn})  & \text{ if } \quad \sum\limits_{q=1}^{\infty} q^{m+n-1} g\left( \frac{\widehat{\psi}(q)}{q}\right)\,=\,\infty,
\end{cases}
\end{equation*}
where $\widehat{\psi}(q)=\psi(q^{n})^{\frac{1}{m}}.$

\end{thm}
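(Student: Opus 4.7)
The set $W_{m,n}(\psi)$ is a classical limsup set whose natural building blocks are $\delta$-neighbourhoods of $m(n-1)$-dimensional affine subspaces: for each $\qq \in \Z^n \setminus \{\0\}$ and $\pp \in \Z^m$, the subspace $R_{\pp,\qq} = \{A \in X^{mn} : A\qq = \pp\}$ has codimension $m$, and the condition $\|A\qq - \pp\|^m < \psi(\|\qq\|^n)$ says precisely that $A$ lies in the $(\widehat{\psi}(\|\qq\|)/\|\qq\|)$-neighbourhood of some $R_{\pp,\qq}$. So the plan splits cleanly into a convergence and a divergence half.

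For the convergence half, I would apply the Hausdorff--Cantelli lemma (Lemma \ref{CB}) to the obvious cover. At level $\|\qq\|=q$ there are $\asymp q^{n-1}$ primitive directions, and for each such $\qq$ there are $\asymp q^m$ translates $\pp$ for which $R_{\pp,\qq}$ meets the unit cube. Each slab $\Delta(R_{\pp,\qq}, \delta_q)$, with $\delta_q := \widehat{\psi}(q)/q$, has thickness $\delta_q$ in the $m$ transverse directions and length $\asymp 1$ in the remaining $m(n-1)$ directions, so it admits a cover by $\asymp \delta_q^{-m(n-1)}$ balls of diameter $\delta_q$. Using the relation $f(r) = r^{m(n-1)} g(r)$, the total $f$-cost at level $q$ is
\begin{equation*}
q^{n-1}\cdot q^m \cdot \delta_q^{-m(n-1)} \cdot f(\delta_q) \;=\; q^{m+n-1}\, g(\delta_q),
\end{equation*}
and summability of this immediately yields $\mathcal{H}^f(W_{m,n}(\psi)) = 0$.

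For the divergence half, the plan is to combine the Khintchine--Groshev Theorem with a Mass Transference Principle for systems of affine subspaces. Define an auxiliary approximating function $\widetilde{\psi}$ by
\begin{equation*}
\widetilde{\psi}(q^n) \;:=\; q^m\, g\!\left(\frac{\widehat{\psi}(q)}{q}\right),
\end{equation*}
so that the ``old'' Lebesgue approximation radius $\widetilde{\psi}(q^n)^{1/m}/q$ and the ``new'' radius $\delta_q$ are related by $g(\delta_q) = (\text{old radius})^{d-k}$, with codimension $d-k=m$; this is exactly the transfer rule governing the MTP for $k$-dimensional planes. A standard dyadic comparison converts the target divergence series of the theorem into $\sum_k \widetilde{\psi}(k) = \infty$; Khintchine--Groshev then provides $\mathcal{L}(W_{m,n}(\widetilde{\psi})) = 1$, and the plane MTP upgrades this to $\mathcal{H}^f(W_{m,n}(\psi)) = \mathcal{H}^f([0,1]^{mn})$.

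The main technical obstacle is verifying the hypotheses of the plane MTP: unlike its ball counterpart, it does not follow for free from a Lebesgue full-measure statement but requires a quantitative \emph{local ubiquity} for the family $\{R_{\pp,\qq}\}$. In the linear-forms setting this ubiquity is furnished by classical Minkowski/Dirichlet transference estimates together with the Khintchine--Groshev divergence half, and the assumption $mn>1$ (equivalently, $\dim R_{\pp,\qq}>0$) is crucial for the plane MTP to apply. Secondary points to check are that the monotonicity of $r \mapsto r^{-mn}f(r)$ makes $\widetilde{\psi}$ nonincreasing up to a bounded factor so that Khintchine--Groshev is applicable, and that $g$ being a dimension function makes the reparametrization via $g^{-1}$ well-defined.
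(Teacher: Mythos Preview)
This theorem is not proved in the present paper; it is quoted verbatim from Allen--Beresnevich \cite{AlBe_18} as a black-box tool, and the only place its proof is touched on is in Lemma~\ref{AlBe}, where the authors invoke the measure construction of \cite[Section~5]{AlBe_18} rather than reproving anything. So there is no ``paper's own proof'' to compare against.

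That said, your sketch is essentially the Allen--Beresnevich strategy: Hausdorff--Cantelli with the slab cover for convergence, and the Mass Transference Principle for linear forms (planes) combined with Khintchine--Groshev for divergence. Two small remarks. First, your parenthetical ``$mn>1$ (equivalently, $\dim R_{\pp,\qq}>0$)'' is not an equivalence: $\dim R_{\pp,\qq}=m(n-1)$, which vanishes when $n=1$ even though $mn>1$ may still hold. The hypothesis $mn>1$ in \cite{AlBe_18} is there so that Khintchine--Groshev applies to an \emph{arbitrary} (not necessarily monotonic) $\psi$; when $n=1$ the resonant sets are points and the argument reduces to the ball MTP rather than the plane version. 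Second, the ``main technical obstacle'' you flag---establishing local ubiquity for the family $\{R_{\pp,\qq}\}$---is exactly what occupies the bulk of \cite{AlBe_18}, so your identification of where the real work lies is accurate.
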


\subsection{Ubiquitous systems}\label{US}
To prove the divergent parts of Theorem \ref{BST2} we will use the ubiquity technique developed by Beresnevich, Dickinson, and Velani, see \cite[\S12.1]{BeDiVe_06}. The idea and concept of ubiquity was originally formulated by Dodson, Rynne, and Vickers in \cite{DoRyVi_91} and coincided in part with the concept of `regular systems' of Baker and Schmidt \cite{BaSc_70}. Both have proven to be extremely useful in obtaining lower bounds for the Hausdorff dimension of limsup sets. The ubiquity framework in \cite{BeDiVe_06} provides a general and abstract approach for establishing the Lebesgue and Hausdorff measure of a large class of limsup sets.

Consider the $mn$-dimensional unit cube $[0,1]^{mn}$ with the supremum norm $\|\cdot\|$. Let $\mathcal{R}=\{ R_{\kappa}\subseteq [0,1]^{mn}:\kappa\in J\}$ be a family of subsets, referred to as resonant sets $R_{\kappa}$ of $[0,1]^{mn}$ indexed by an infinite, countable set $J.$ Let $\beta:J\to\R_{+}:\kappa\mapsto \beta_{\kappa}$ be a positive function on $J$ i.e. the function $\beta$ attaches the weight $\beta_{\kappa}$ to the set $R_{\kappa}.$ Next assume that the number of terms $\kappa$ in $J$ with $\beta_{\kappa}$ bounded above is always finite. Following the ideas from \cite[\S12.1]{BeDiVe_06} and \cite{KiKi_20} let us assume that the family $\mathcal R$ of resonant sets $R_{\kappa}$ consists of $(m-1)n$-dimensional, rational hyperplanes and define the following notations.
 For a set $S\subseteq [0,1]^{mn},$ let 
\begin{equation*}
\Delta(S,r):=\{V\in[0,1]^{mn}:\text{dist}(V,S)<r\},
\end{equation*}
where $\text{dist}(V,S):=\inf\{ \|V-Y \|:Y\in S \}.$ Fix a decreasing function $\Psi:\R_{+}\to\R_{+}$  let

\begin{equation}
\Lambda(\Psi)=\{ V\in[0,1]^{mn}:V \in \Delta(R_{\kappa},\Psi(\beta_{\kappa})) \text{ for i.m. } \kappa\in J \}
\end{equation}
The set $\Lambda(\Psi)$ is a lim sup set; it consists of elements of $[0,1]^{mn}$ which lie in infinitely many of the thickenings $\Delta(R_{\kappa},\Psi(\beta_{\kappa})).$ It is natural to call $\Psi$ the approximating function as it governs the `rate' at which the elements of $[0,1]^{mn}$ must be approximated by resonant sets in order to lie in $\Lambda(\Psi)$. Let us rewrite the set $\Lambda(\Psi)$ in a way which brings its lim sup nature to the forefront.

For $N\in\N,$ let 
\begin{equation*}
\Delta(\Psi,N):=\bigcup_{\kappa\in J:2^{N-1}<\beta_{\kappa}\leq 2^{N}}\Delta(R_{\kappa},\Psi(\beta_{\kappa})).
\end{equation*}
Thus $\Lambda(\Psi)$ is the set consisting elements of $[0,1]^{mn}$ which lie in infinitely many $\Delta(\Psi,N),$ that is,
\begin{equation}
\Lambda(\Psi):=\limsup_{N\to\infty}\Delta(\Psi,N)
\end{equation}
Next let $\rho:\R_{+} \to \R_{+}$ be a function with $\rho(t)\to0$ as $t\to\infty$ and let 
\begin{equation}\Delta(\rho,N):=\bigcup_{\kappa\in J:2^{N-1}<\beta_{\kappa}\leq 2^{N}}\Delta(R_{\kappa},\rho(\beta_{\kappa})).
\end{equation}

\begin{dfn}
Let B be an arbitrary ball in $[0,1]^{mn}.$ Suppose there exist a function $\rho$ and an absolute constant $\kappa>0$ such that
\begin{equation}
|B\cap\Delta(\rho,N)|\geq\kappa|B| \text{ for } N\geq N_{0}(B),
\end{equation}
where $|\cdot|$ denotes the Lebesgue measure on $[0,1]^{mn}.$ Then the pair $(\mathcal R,\beta)$ is said to be a `local ubiquitous system' relative to $\rho$ and the function $\rho$ will be referred to as the `ubiquitous function'.
\end{dfn}

A function $h$ is said to be $2$-regular if there exists a strictly positive constant $\lambda<1$ such that for $N$ sufficiently large
$$h(2^{N+1})\leq\lambda h(2^{N}).$$

The next theorem is a simplified version of Theorem 1 and Theorem 2 from \cite{BeDiVe_06}. To state the result we define notions similar to those in \cite{BeDiVe_06}. Note that with notions in \cite{BeDiVe_06}, we have $\Omega:=[0,1]^{mn}$, the Lebesgue measure on $[0,1]^{mn}$ is of type (M2) with $\delta=mn$ and $\gamma=(m-1)n$ and the local ubiquitous system $(\mathcal R,\beta)$ satisfies the intersection conditions with $\gamma=(m-1)n$ (see \cite[section 12.1]{BeDiVe_06}).
 Given that the Lebesgue measure is comparable with $\mathcal{H}^{\delta}-$ a simple consequence of (M2), we have the following combined version of Theorem 1 and Theorem 2 from \cite{BeDiVe_06}. 
 \begin{thm}\label{thm5}
 Suppose that $(\mathcal R,\beta)$ is a local ubiquitous system relative to $\rho$ and that $\Psi$ is an approximating function. Let $f$ be a dimension function such that $r^{-nm}f(r)$ is monotonic, $r^{-nm}f(r)\to\infty$ as $r\to 0$ and $r^{-n(m-1)}f(r)$ is increasing.
Furthermore, suppose that $\rho$ is $2$-regular and 
\begin{equation}\label{divcon}
\sum^{\infty}_{n=1}\frac{(\Psi(2^{N}))^{-n(m-1)}f(\Psi(2^{N}))}{\rho(2^{N})^{n}}=\infty.
\end{equation}
Then
\begin{equation}
\mathcal{H}^{f}(  \Lambda(\Psi)      )=  \mathcal{H}^{f}(  [0,1]^{mn}).   \end{equation}
\end{thm}

\begin{proof}
With $\delta=mn,$ and $\gamma=(m-1)n$ the function $g$ in \cite[Theorem 2]{BeDiVe_06}  becomes $g(r):=f(\Psi(r))\Psi(r)^{-\gamma}\rho(r)^{\gamma - \delta}=f(\Psi(r))\Psi(r)^{-(m-1)n}\rho(r)^{-n}.$ Also $\rho$ is 2-regular, thus from \cite[Theorem 2]{BeDiVe_06} it follows that
\begin{equation*}
\mathcal{H}^{f}(  \Lambda(\Psi)      )=\infty \quad \text{ if } \quad \sum^{\infty}_{n=1}g(2^{N})=\infty,
\end{equation*}
which is same as the  divergent sum condition  in \eqref{divcon}. 

Note that as the dimension function $r^{-nm}f(r) \to \infty$ as $r\to 0$ then $H^f (\Omega)=\infty$
and Theorem \ref{thm5} leads to the same conclusion as Theorem 2 in \cite{BeDiVe_06}.
%
 \end{proof}

\subsection{Dirichlet improvability and homogenous dynamics}
In one dimensional settings, continued fraction expansions have been useful in characterising $\psi$-Dirichlet improvable numbers \cite{KlWa_18}. However this machinery is not applicable in higher dimensions. For general dimensions, building on ideas from \cite{Da_85} (also see \cite{Kl_98}), a dynamical approach was proposed in \cite{KlWa_18}, reformulating the homogenous approximation problem as a shrinking target problem and a similar approach was used in \cite{KlWa_19} to solve an analogous inhomogeneous problem. Following the ideas from \cite{ KiKi_20, KlWa_19} we will use the
standard argument usually known as the `Dani correspondence' which serves as a connection between Diophantine approximation and homogenous dynamics. In order to describe how Dirichlet-improvability is related to dynamics we will start by recalling the dynamics on space of grids. To describe this dynamical interpretation, let us fix some notation. 

Fix $d=m+n.$ Let $$G_{d}=SL_{d}(\R) \text{ and }  \widehat{G}_{d}=ASL_{d}(\R)=G_{d} \rtimes \R^{d} $$ 
and put
$$\Gamma_{d}=SL_{d}(\Z) \text{ and }  \widehat{\Gamma}_{d}=ASL_{d}(\Z)=\Gamma_{d} \rtimes \Z^{d}.$$ 

Denote by $\widehat{Y}_{d}$ the space of affine shifts of  unimodular lattices in $\R^{d}$ (i.e. space of unimodular grids).
Clearly, $\widehat{Y}_{d}$ is canonically identified with $\widehat{G}_{d}/ \widehat{\Gamma}_{d}$ via 
$$<g,\ww>\widehat{\Gamma}_{d}\in \widehat{G}_{d}/ \widehat{\Gamma}_{d}  \longleftrightarrow g\Z^{d}+\ww\in \widehat{Y}_{d}$$ 
where
$<g,\ww>$ is an element of $\widehat{G}_{d}$ such that $g\in G_{d}$ and $\ww\in\R^{d}.$
Similarly, $Y_{d}:=G_{d}/{\Gamma}_{d}$ is identified with the space of unimodular lattices in $\R^{d}$ (i.e. the space of unimodular grids containing zero vector). Note that ${\Gamma}_{d}$ (respectively, $\widehat{\Gamma}_{d}$) is a lattice in $G_{d}$ (respectively, $\widehat{G}_{d}$). Denote by $m_{Y_{d}}$ the Haar probability measure on $Y_{d}.$
For any $t\in\R,$ the flow of interest $a_{t}$ is given by the diagonal matrix
$$ a_{t}:=\text{diag}(e^{t/m},\cdots,e^{t/m},e^{-t/n},\cdots, e^{-t/n}).$$
Let  
$$u_{A}:=\left( \begin{array}{cc} I_{m} & A \\
0 & I_{n} \end{array} \right)\in G_{d},$$
$$u_{A,\bb}:=\left<\left( \begin{array}{cc} I_{m} & A \\
0 & I_{n} \end{array} \right),
\left( \begin{array}{c} \bb \\ 0 \end{array} \right)\right>\in \widehat G_{d}$$
for $A\in X^{mn}$ and $(A,\bb)\in X^{mn}\times\R^{m}.$ 
Let us also denote by
$$\Lambda_{A}:=u_{A}\Z^{d}\in Y_{d} \text{ and }\Lambda_{A,\bb}:=u_{A,\bb}\Z^{d}\in \widehat{Y}_{d},$$ where 
$u_{A,\bb}\Z^{d}=\left\{\left( \begin{array}{cc} A\qq+\bb-\pp  \\
\qq  \end{array} \right):\pp\in \Z^{m},\qq\in \Z^{n}\right\}.$ 

Following \cite{KlWa_19}, define $\Delta:\widehat {Y}_{d}\to[-\infty,+\infty)$ by 
$$\Delta(\Lambda):=\log\inf_{\vv\in\Lambda}\|\vv\|.$$
\begin{lem}[\cite{KlMa_99}]\label{KlMa_19}
Let $\psi:[T_{0},\infty)\to\R_{+}$ be a continuous, non-increasing function where $T_{0}\in\R_{+}$ and $m,n$ be positive integers. Then there exists a continuous function
$$z=z_{\psi}:[t_{0},\infty)\to\R,$$
where $t_{0}:=\frac{m}{m+n}\log T_{0}-\frac{n}{m+n}\log\psi(T_{0}),$ such that
\begin{itemize}
\item [\rm{ (i) }] the function $t\mapsto t+nz(t)$ is strictly increasing and unbounded;
\item [\rm{ (ii) }]the function $t\mapsto t-mz(t)$ is non-decreasing;
\item [\rm{ (iii) }]$\psi(e^{t+nz(t)})=e^{-t+mz(t)}$ for all $t\geq t_{0}.$
\end{itemize}
\end{lem}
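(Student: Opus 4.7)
The plan is a direct change-of-variables argument. Condition (iii) asks for $\log T = t + nz(t)$ and $\log \psi(T) = -t + mz(t)$ simultaneously, so I reverse the logic and treat $(T, \psi(T))$ as the input, looking for $(t,z)$ as a function of $T$. Solving the linear system for $(t,z)$ in terms of $(\log T, \log \psi(T))$ yields
\[
t(T) \;=\; \frac{m \log T - n \log \psi(T)}{m+n}, \qquad \tilde z(T) \;=\; \frac{\log T + \log \psi(T)}{m+n}.
\]
A direct substitution at $T = T_0$ gives $t(T_0) = t_0$ as required.

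Next I would verify that $t(\cdot) : [T_0, \infty) \to [t_0, \infty)$ is a continuous bijection. Continuity is inherited from $\psi$. Strict monotonicity holds because $m \log T$ is strictly increasing in $T$ while $-n \log \psi(T)$ is nondecreasing ($\psi$ being nonincreasing). Unboundedness follows from $m \log T \to \infty$ together with the fact that $-n \log \psi(T) \ge -n \log \psi(T_0)$ remains bounded below. Hence $t(\cdot)$ admits a continuous strictly increasing inverse $T(\cdot) : [t_0, \infty) \to [T_0, \infty)$, and I set $z(t) := \tilde z(T(t))$, which is continuous.

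Properties (i)--(iii) are then read off from the two defining identities $t + n z(t) = \log T(t)$ and $-t + m z(t) = \log \psi(T(t))$, which hold by the very construction of the change of variables. Exponentiating the second identity gives (iii). For (i), the quantity $t + n z(t) = \log T(t)$ is strictly increasing and unbounded because $T : [t_0,\infty) \to [T_0,\infty)$ is a strictly increasing homeomorphism. For (ii), $t - m z(t) = -\log \psi(T(t))$ is nondecreasing because $\psi$ is nonincreasing and $T(t)$ is increasing.

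There is no genuine obstacle here: once the substitution is written down, everything reduces to verifying monotonicity of elementary functions of one variable. The one point worth flagging is the surjectivity $t(T) \to \infty$, which relies crucially on $\psi$ being nonincreasing (so that the $m \log T$ term is not cancelled); this is the sole place the hypothesis on $\psi$ is used quantitatively. Note that $z$ may be constant on intervals where $\psi$ is constant, but this is compatible with all three listed properties.
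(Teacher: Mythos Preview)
Your argument is correct. The paper does not supply its own proof of this lemma; it is quoted verbatim from Kleinbock--Margulis \cite{KlMa_99} (see also \cite[Lemma~2.3]{KiKi_20}) and used as a black box. Your change-of-variables construction is exactly the standard one: solve the linear system $t+nz=\log T$, $-t+mz=\log\psi(T)$ for $(t,z)$ in terms of $T$, check that $T\mapsto t(T)$ is a strictly increasing continuous surjection onto $[t_0,\infty)$, and push $\tilde z$ through the inverse. All three properties then follow immediately, as you note.

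One small inaccuracy in your closing remark: it is not true that $z$ is constant on intervals where $\psi$ is constant. If $\psi\equiv c$ on $[T_1,T_2]$ then $\tilde z(T)=\frac{\log T+\log c}{m+n}$ is strictly increasing there, hence so is $z(t)$. What \emph{is} true is that on such an interval $t-mz(t)=-\log\psi(T(t))$ is constant, which is consistent with (ii) being only ``nondecreasing'' rather than strictly increasing. This does not affect the validity of your proof.
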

Note that, properties $(i)$ and $(ii)$ of Lemma \ref{KlMa_19} imply that any $z=z_{\psi}$ does not oscillate too wildly. Namely,
$ z(s)-\frac{1}{m}\leq z(u)\leq z(s)+\frac{1}{n}  \text{ whenever } s\leq u\leq s+1. $

The following lemma, which rephrases $\psi$-Dirichlet improvable properties of $(A,\bb)\in  X^{mn}\times\R^{m}$ as the statement about the orbit of $\Lambda_{A,\bb}$ in the dynamical space $(\widehat{Y}_{d},a_{t})$, is the general version of the correspondence between the improvability of the inhomogeneous Dirichlet theorem and dynamics on $\widehat{Y}_{d}.$
\begin{lem}[\cite{KlWa_19}]\label{KKL2.3}
Let $z=z_{\psi}$ be the function associated to $\psi$ by Lemma \ref{KlMa_19}. Then $(A,\bb)\in\widehat{D}_{m,n}(\psi)$ if and only if $\Delta(a_{t}\Lambda_{A,\bb})<z_{\psi}(t)$ for all sufficiently large $t.$
\end{lem}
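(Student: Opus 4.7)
The plan is to directly unpack both sides of the claimed equivalence into statements about integer vectors $\pp,\qq$, and then match them via the change of variable $T = e^{t+nz_\psi(t)}$ provided by Lemma \ref{KlMa_19}. Once the explicit form of $a_t\Lambda_{A,\bb}$ is written down, the two sides become two spellings of the same statement.

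First I would describe the vectors of $a_t\Lambda_{A,\bb}$. Any element of $\Lambda_{A,\bb}=u_{A,\bb}\Z^d$ has the form
$$\vv=\begin{pmatrix} A\qq+\bb-\pp \\ \qq \end{pmatrix},\qquad \pp\in\Z^m,\ \qq\in\Z^n,$$
so applying $a_t$ multiplies the first $m$ coordinates by $e^{t/m}$ and the last $n$ by $e^{-t/n}$. Since $\|\cdot\|$ is the supremum norm, the single inequality $\|a_t\vv\|<e^{z_\psi(t)}$ decomposes as the conjunction of $\|A\qq+\bb-\pp\|<e^{z_\psi(t)-t/m}$ and $\|\qq\|<e^{z_\psi(t)+t/n}$; raising to the $m$-th and $n$-th powers respectively gives $\|A\qq+\bb-\pp\|^m<e^{mz_\psi(t)-t}$ and $\|\qq\|^n<e^{t+nz_\psi(t)}$. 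Hence $\Delta(a_t\Lambda_{A,\bb})<z_\psi(t)$ is equivalent to the existence of integer vectors $\pp,\qq$ satisfying this pair of inequalities.

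Second I would identify the right-hand sides with $\psi(T)$ and $T$. Setting $T(t):=e^{t+nz_\psi(t)}$, property (iii) of Lemma \ref{KlMa_19} yields $e^{mz_\psi(t)-t}=\psi(T(t))$, so the pair of inequalities above is precisely the defining $\psi$-Dirichlet system \eqref{InhDT} with $T=T(t)$. Property (i) of Lemma \ref{KlMa_19} states that $t\mapsto T(t)$ is continuous, strictly increasing, and unbounded, so its range covers a one-sided tail $[T_0',\infty)$ bijectively; consequently the universal quantifier "for all sufficiently large $t$" on the dynamical side matches "for all sufficiently large $T$" on the Diophantine side.

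The only subtle point I expect is the tension between the unrestricted infimum defining $\Delta(\Lambda)=\log\inf_{\vv\in\Lambda}\|\vv\|$ and the nonzero-integer requirement in the definition of $\widehat{D}_{m,n}(\psi)$. For $\bb\notin\Z^m$ the grid $\Lambda_{A,\bb}$ contains no zero vector, so any short $\vv$ automatically extracts to a nontrivial pair $(\pp,\qq)$ and nothing is lost; for $\bb\in\Z^m$ the grid reduces to the homogeneous lattice $\Lambda_{A}$, for which the classical homogeneous Dani correspondence of \cite{KlMa_99} already handles the trivial solution. Beyond this routine bookkeeping, the proof is the direct translation in the previous two paragraphs, with all the analytic content absorbed into the function $z_\psi$ constructed by Lemma \ref{KlMa_19}.
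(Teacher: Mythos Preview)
The paper does not give its own proof of this lemma; it is stated as a citation from \cite{KlWa_19}. Your argument is correct and is exactly the standard Dani correspondence computation underlying the cited result: write out $a_t\Lambda_{A,\bb}$ explicitly, split the sup-norm inequality $\|a_t\vv\|<e^{z_\psi(t)}$ into its two coordinate blocks, and match the resulting pair with \eqref{InhDT} via the substitution $T=e^{t+nz_\psi(t)}$ and Lemma~\ref{KlMa_19}(iii).
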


This equivalence is usually called the Dani Correspondence. In view of this interpretation a pair fails to be $\psi$-Dirichlet improvable if and only if the associated grid visits the target $\Delta^{-1}([z_{\psi}(t),\infty))$ at unbounded times $t$ under the flow $a_{t}.$ Note that from the above lemma in the definitions  $\widehat{D}_{m,n}(\psi)^{c}=\limsup\limits_{t\to\infty} \left\{ (A,\bb):\Delta(a_{t}\Lambda_{A,\bb})\geq z_{\psi}(t)\right\}  $ and $\widehat{D}^{\bb}_{m,n}(\psi)^{c}=\limsup\limits_ {t\to\infty}\left\{ A:\Delta(a_{t}\Lambda_{A,\bb})\geq z_{\psi}(t)\right\},$ the limsup is taken for real values $t\in\R$.
 However to prove the convergent part, we need to use Hausdorff--Cantelli lemma (Lemma \ref{CB}), therefore we will consider limsup sets taken for $t\in\N.$ Thus we will use the following definitions: there exists a non-zero positive constant $C_{0}$ such that
\begin{align}\label{alDI}
\widehat{D}_{m,n}(\psi)^{c}&\subseteq \limsup\limits_{t\to\infty,t\in\N}\left\{ (A,\bb):\Delta(a_{t}\Lambda_{A,\bb})\geq z_{\psi}(t)-C_{0}\right\},\\  \label{alDI1}\widehat{D}^{\bb}_{m, n}(\psi)^{c}&\subseteq\limsup\limits_ {t\to\infty,t\in\N}\left\{ A:\Delta(a_{t}\Lambda_{A,\bb})\geq z_{\psi}(t)-C_{0}\right\}. \end{align}
The validity of these definitions can be observed by the fact that $z_{\psi}$ does not oscillate wildly by \cite[Remark 3.3]{KlWa_19} and $\Delta$ is uniformly  continuous on the set $\Delta^{-1}([z,\infty))$ for any $z\in\R,$
(\cite[Lemma 2.1]{KlWa_19}).

\section{Proof of Theorem \ref{BST1} and \ref{BST2}: the convergent case}

\begin{lem}\label{BS1}
Let $\psi:[T_{0},\infty)\rightarrow {\R_{+}}$ be a non-increasing function, and let $z=z_{\psi}$ be the function associated to $\psi$ by Lemma \ref{KlMa_19}. Let $f $ be a dimension function satisfying \eqref{C1} and \eqref{C2} where $nm-n<s\leq nm.$ Also suppose that \eqref{C05} holds.
Then we have
\begin{equation*}
\sum\limits_{q=\lceil{T_{0}}\rceil}^{\infty} \frac{1}{\psi(q)q^{2}} \left( \frac{ q^{\frac{1}{n}}}{\psi(q)^\frac{1}{m}}\right)^{mn} f\left( \frac{\psi(q)^\frac{1}{m}}{ q^{\frac{1}{n}}}\right)<\infty 
\iff  \sum\limits_{t=\lceil{t_{0}}\rceil}^{\infty}e^{-(m+n)z(t)}e^{(m+n)t}f(e^{-\frac{(m+n)t}{mn}})<\infty.
\end{equation*}
\end{lem}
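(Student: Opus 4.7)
The plan is to apply the substitution $q = q(t) := e^{t+nz(t)}$ furnished by Lemma \ref{KlMa_19}, under which $\psi(q(t)) = e^{-t+mz(t)}$, and to compare the two series term by term via a binning argument. A direct computation using $q(t)^{1/n}/\psi(q(t))^{1/m} = e^{(m+n)t/(mn)}$ and $1/(\psi(q(t))\,q(t)^{2}) = e^{-t-(m+2n)z(t)}$ shows that if $F(q)$ denotes the LHS summand and $G(t)$ the RHS summand, then $q(t)\,F(q(t)) = G(t)$ \emph{exactly}. This identity suggests $\sum_q F(q) \asymp \sum_t G(t)$, since summing $F$ over the integer $q$'s lying near $q(t)$ should recover $G(t)$ provided the number of such integers per unit change in $t$ is comparable to $q(t)$.

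To make this rigorous, note that since $t \mapsto q(t)$ is strictly increasing and unbounded by Lemma \ref{KlMa_19}(i), the blocks $B_t := \mathbb{N} \cap (q(t-1), q(t)]$ partition $\mathbb{N} \cap (q(t_0), \infty)$ as $t$ ranges over integers $\ge \lceil t_0 \rceil$. The monotonicity of $\psi$ combined with the effective bounds $z'(t) \in [-1/n, 1/m]$ (from Lemma \ref{KlMa_19}(i),(ii)) yields $\psi(q) \asymp \psi(q(t))$ on $B_t$, and applying the regularity condition \eqref{C1} on $f$ with $\alpha$ chosen appropriately gives $f(\psi(q)^{1/m}/q^{1/n}) \asymp f(\psi(q(t))^{1/m}/q(t)^{1/n})$; consequently $F(q) \asymp F(q(t))$ uniformly for $q \in B_t$. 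Therefore $\sum_{q \in B_t} F(q) \asymp |B_t| \cdot F(q(t)) = (|B_t|/q(t)) \cdot G(t)$, and it suffices to show $\sum_t (|B_t|/q(t))\,G(t) \asymp \sum_t G(t)$. The upper bound is immediate from $|B_t|/q(t) \le 1$; for the lower bound one regroups the $t$'s into coarser blocks $(\tau_{j-1}, \tau_j]$ with $q(\tau_j) \ge 2\,q(\tau_{j-1})$, on which $|B'_j|/q(\tau_j) \asymp 1$ for $B'_j := \mathbb{N} \cap (q(\tau_{j-1}), q(\tau_j)]$.

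The main technical obstacle is controlling the ``bad'' regime in which $q(t)$ grows slowly in $t$ (equivalently $1+nz'(t)$ is nearly zero), so that individual blocks $B_t$ may be empty. Here the key observation is that rapid decay of $\psi$---which is precisely what forces $q$ to grow slowly---simultaneously forces $G(t)$ to grow rapidly: a direct computation gives
\[
\frac{d\log G}{dt} = (m+n)\bigl(1 - z'(t)\bigr) - \frac{m+n}{mn}\,a\bigl(e^{-(m+n)t/(mn)}\bigr),
\]
which in the bad regime $z'(t) \approx -1/n$ is bounded below by a positive constant (using $a(r) \to s$ from \eqref{C2} together with the range $mn-n < s \le mn$ of allowed $s$). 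Consequently $G$ grows geometrically across each coarse block $(\tau_{j-1}, \tau_j]$, so that $\sum_{t \in (\tau_{j-1}, \tau_j]} G(t) \asymp G(\tau_j) \asymp q(\tau_j)\,F(q(\tau_j)) \asymp \sum_{q \in B'_j} F(q)$. Summing over $j$ yields the claimed equivalence.
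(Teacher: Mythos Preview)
Your route is genuinely different from the paper's. The paper passes to integrals, rewrites both over the common variable $\lambda=\log q$, observes that the $t$-integral equals $\tfrac{m}{m+n}$ times the $\lambda$-integral plus an extra Stieltjes term $\tfrac{n}{m+n}\int(\cdots)\,dP(\lambda)$ with $P=-\log\psi\circ\exp$, and then controls that extra term by integration by parts combined with \eqref{C1}--\eqref{C2}; the hypothesis $s>mn-n$ enters precisely to make the resulting coefficient $\tfrac1n a(\cdot)-(m-1)$ eventually positive. Your dyadic/binning argument is more elementary and makes the exact identity $q(t)F(q(t))=G(t)$ do most of the work.

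There is, however, a real gap in your final chain. The step
\[
q(\tau_j)\,F(q(\tau_j)) \ \asymp\ \sum_{q\in B'_j} F(q)
\]
only holds in the direction $\gg$. On $B'_j=(q(\tau_{j-1}),q(\tau_j)]$ one has $\psi(q)\ge\psi(q(\tau_j))$, and since the effective exponent of $\psi$ in $F$ (after using \eqref{C2} with $a\approx s$) is $s/m-(n+1)<0$, this forces $F(q)\ll F(q(\tau_j))$ on $B'_j$; hence $\sum_{q\in B'_j}F(q)\ll |B'_j|\,F(q(\tau_j))\asymp G(\tau_j)$, which reproves only the easy direction. For the hard implication $\sum_q F(q)<\infty\Rightarrow\sum_t G(t)<\infty$ you need the opposite inequality, and the fix is to shift by one coarse block: on $B'_{j+1}$ one has $\psi(q)\le\psi(q(\tau_j))$, whence $F(q)\gg F(q(\tau_j))$ and therefore
\[
G(\tau_j)=q(\tau_j)F(q(\tau_j))\asymp|B'_{j+1}|\,F(q(\tau_j))\ll\sum_{q\in B'_{j+1}}F(q).
\]
Summing over $j$ then gives $\sum_t G(t)\asymp\sum_j G(\tau_j)\ll\sum_q F(q)$, completing the argument.

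A secondary imprecision: the claim that $G$ ``grows geometrically across each coarse block'' is not correct pointwise, since $d\log G/dt$ can be negative when $z'(t)$ is near $1/m$. What is actually true (and sufficient for $\sum_{t\in(\tau_{j-1},\tau_j]}G(t)\asymp G(\tau_j)$) is the one-sided bound $G(t)\le Ce^{-c(\tau_j-t)}G(\tau_j)$ uniformly on the block; this follows from your displayed formula for $d\log G/dt$ together with the observation that $\log q(\tau_j)-\log q(t)$ is \emph{bounded} on the block, not merely that $z'(t)\approx -1/n$.
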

\begin{proof}
The proof of this lemma uses ideas introduced in \cite[Lemma 8.3]{KlMa_99} and \cite{KlWa_19}. Using the monotonicity of $\psi$ and \cite[Remark 3.3]{KlWa_19}, let us replace the sums with integrals
\begin{equation*}
\int_{T_{0}}^{\infty}     \frac{1}{\psi(x)x^{2}} \left( \frac{ x^{\frac{1}{n}}}{\psi(x)^\frac{1}{m}}\right)^{mn} f\left( \frac{\psi(x)^\frac{1}{m}}{ x^{\frac{1}{n}}}\right)dx
\text{ and }
\int_{t_{0}}^{\infty}e^{-(m+n)z(t)}e^{(m+n)t}f(e^{-\frac{(m+n)t}{mn}})dt.
\end{equation*}
Define
\begin{equation*}
P:=-\log\circ\psi\circ\exp:[T_{0},\infty)\rightarrow\R \text{ and } \lambda(t):=t+nz(t).
\end{equation*}
Since $\psi(e^{\lambda})=e^{-P(\lambda)}$, letting $\log x=\lambda$ we have
\begin{align}
\int_{T_{0}}^{\infty}     \frac{1}{\psi(x)x^{2}} \left( \frac{ x^{\frac{1}{n}}}{\psi(x)^\frac{1}{m}}\right)^{mn} f\left( \frac{\psi(x)^\frac{1}{m}}{ x^{\frac{1}{n}}}\right)dx&=&
\int_{\log T_{0}}^{\infty}     \frac{1}{\psi(e^{\lambda})e^{2\lambda}} \left( \frac{ e^{\lambda m}}{\psi(e^{\lambda})^{n}}\right) f\left( \frac{\psi(e^{\lambda})^\frac{1}{m}}{ e^{\frac{\lambda}{n}}}\right)e^{\lambda}d\lambda \notag \\
&=&  \int_{\log T_{0}}^{\infty}  e^{(m-1)\lambda}e^{(1+n)P(\lambda)}f(e^{\frac{-P(\lambda)}{m} }   e^{\frac{-\lambda}{n}}               )             d\lambda. \label{Ea}
\end{align}

Using $P(\lambda(t))=t-mz(t),$ we have
\begin{align}
\int_{t_{0}}^{\infty}e^{-(m+n)z(t)}e^{(m+n)t}f(e^{-\frac{(m+n)t}{mn}})dt&= \int_{t_{0}}^{\infty} e^{(m-1)\lambda}e^{(1+n)P(\lambda)}f(e^{\frac{-P(\lambda)}{m} }   e^{\frac{-\lambda}{n}}               )             d\left[\frac{m}{m+n}\lambda+\frac{n}{m+n}P(\lambda)\right] 
\notag\\
&=\frac{m}{m+n} \int_{\log T_{0}}^{\infty}  e^{(m-1)\lambda}e^{(1+n)P(\lambda)}f(e^{\frac{-P(\lambda)}{m} }   e^{\frac{-\lambda}{n}}  )  d\lambda
\notag\\ 
&+ \frac{n}{m+n}\int_{\log T_{0}}^{\infty}  e^{(m-1)\lambda}e^{(1+n)P(\lambda)}f(e^{\frac{-P(\lambda)}{m} }   e^{\frac{-\lambda}{n}}               ) d(P(\lambda)) .  \label{sat}          \end{align}

The  term in the last line can be expressed by 
\begin{align}
&\frac{n}{m+n}\int_{\log T_{0}}^{\infty}  e^{(m-1)\lambda}e^{(1+n)P(\lambda)}f(e^{\frac{-P(\lambda)}{m} }  e^{\frac{-\lambda}{n}} ) d(P(\lambda))
\notag \\
&\asymp\frac{n}{m+n}   \int_{\log T_{0}}^{\infty}  e^{(m-1)\lambda}f(   e^{\frac{-\lambda}{n}}  )e^{(1+n)P(\lambda)}e^{-s\frac{P(\lambda)}{m} }    d(P(\lambda)),
\notag \\
&=\frac{n}{m+n}\Big{(1+\frac{mn-s}{m}}\Big)^{-1} \int_{\log T_{0}}^{\infty}  e^{(m-1)\lambda}f(   e^{\frac{-\lambda}{n}}  )  d(e^{(  (1+n) -\frac{s}{m}      )P(\lambda)}), \label{LE}
\end{align}
the second last equation follows from \eqref{C1} and \eqref{C05}. 
Since by using \eqref{C05} and the fact that $\psi(e^{\lambda})=e^{-P(\lambda)}$ we obtain the condition
$$(e^{\frac{-\lambda}{n}})^{\alpha}\leq e^{\frac{-P(\lambda)}{m} } \leq (e^{\frac{-\lambda}{n}})^{\frac{1}{\alpha}},$$
therefore by using \eqref{C1} we can write
$$f(e^{\frac{-P(\lambda)}{m} }   e^{\frac{-\lambda}{n}} )\asymp  e^{-s\frac{P(\lambda)}{m} } f(   e^{\frac{-\lambda}{n}}).$$

Next we will use integration by parts to evaluate the integral in \eqref{LE}.
\begin{align}
&\int_{\log T_{0}}^{\infty}  e^{(m-1)\lambda}f(   e^{\frac{-\lambda}{n}}  )  d(e^{(  (1+n) -\frac{s}{m}      )P(\lambda)})\notag\\
&=- \int_{\log T_{0}}^{\infty} [(m-1) e^{(m-1)\lambda}f(e^{-\frac{\lambda}{n}}) - \frac 1n e^{(m-1)\lambda} e^{\frac{-\lambda}n} f'(e^{\frac{-\lambda}n}) ] e^{((1+n)-\frac{s}{m})P(\lambda)}d\lambda\notag\\
&+e^{(m-1)\lambda}f(e^{-\frac{\lambda}{n}}) e^{((1+n)-\frac{s}{m})P(\lambda)}\Big|^{\infty}_{\log T_{0}}\notag\\
&= \int_{\log T_{0}}^{\infty}\left( (1-m) e^{(m-1)\lambda} f(e^{-\frac{\lambda}{n}})e^{((1+n)-\frac{s}{m})P(\lambda)} +
\frac{1}{n} e^{(m-1)\lambda}f'(e^{-\frac{\lambda}{n}})e^{-\frac{\lambda}{n} }   e^{((1+n)-\frac{s}{m})P(\lambda)}\right)d\lambda \notag
 \\
&+\lim_{\lambda\to\infty} e^{(m-1)\lambda} f(e^{-\frac{\lambda}{n}})e^{((1+n)-\frac{s}{m})P(\lambda)}-T^{m-1}_{0}f(T^{-\frac{1}{n}}_{0})\psi(T_{0})^{-((1+n)-\frac{s}{m})},\notag \\
& \text{ by } \eqref{C2}, \text{ we have } f'(e^{-\frac{\lambda}{n}})= a(e^{-\frac{\lambda}{n}})\frac{f(e^{-\frac{\lambda}{n}})} {e^{-\frac{\lambda}{n}}},\notag\\
&= \int_{\log T_{0}}^{\infty} \left(1-m + \frac 1n a(e^{-\frac \lambda n}) \right)e^{(m-1)\lambda} f(e^{-\frac{\lambda}{n}})e^{((1+n)-\frac{s}{m})P(\lambda)} d\lambda
\notag \\
&+\lim_{\lambda\to\infty} e^{(m-1)\lambda} f(e^{-\frac{\lambda}{n}})e^{((1+n)-\frac{s}{m})P(\lambda)}-T^{m-1}_{0}f(T^{-\frac{1}{n}}_{0})\psi(T_{0})^{-((1+n)-\frac{s}{m})}\notag \\
&\asymp\int_{\log T_{0}}^{\infty} (1-m + \frac 1n a(e^{-\frac \lambda n}))e^{(m-1)\lambda} e^{(1+n)P(\lambda)}f(e^{\frac{-P(\lambda)}{m} }   e^{\frac{-\lambda}{n}}               )  d\lambda
 \\
&+\lim_{\lambda\to\infty} e^{(m-1)\lambda} e^{(1+n)P(\lambda)}f(e^{\frac{-P(\lambda)}{m}    e^{\frac{-\lambda}{n}}               )} -T^{m-1}_{0}f(T^{-\frac{1}{n}}_{0})\psi(T_{0})^{-((1+n)-\frac{s}{m})}.\label{Fri}
\end{align}
Note that as $\lambda\to\infty,$ $e^{-\frac{\lambda}{n}}\to0$ thus by assumption $a(e^{-\frac{\lambda}{n}})\to s$ and therefore 
$$\left(1-\frac{mn-a(e^{-\frac{\lambda}{n}})}{n}\right)\to \left(1-\frac{mn-s}{n}\right),$$ which is finite and positive for $T_{0}$ large enough (since $s > nm - n$). 
Observe that
$$\lim_{\lambda\to\infty} e^{(m-1)\lambda}e^{(1+n)P(\lambda)}f(e^{\frac{-P(\lambda)}{m} }   e^{\frac{-\lambda}{n}}) =0$$ if the integral 
$$\int_{\log T_{0}}^{\infty}  e^{(m-1)\lambda}e^{(1+n)P(\lambda)}f(e^{\frac{-P(\lambda)}{m} }   e^{\frac{-\lambda}{n}} )             d\lambda$$ converges. 
Thus the convergence of 
$$\int_{T_{0}}^{\infty}     \frac{1}{\psi(x)x^{2}} \left( \frac{ x^{\frac{1}{n}}}{\psi(x)^\frac{1}{m}}\right)^{mn} f\left( \frac{\psi(x)^\frac{1}{m}}{ x^{\frac{1}{n}}}\right)dx \text{ or } \int_{t_{0}}^{\infty}e^{-(m+n)z(t)}e^{(m+n)t}f(e^{-\frac{(m+n)t}{mn}})dt 
$$
 implies the convergence of other since all summands are positive  except the finite value
\linebreak{$-T^{m-1}_{0}f(T^{-\frac{1}{n}}_{0})\psi(T_{0})^{-((1+n)-\frac{s}{m})}.$}
\end{proof}

In order to apply the Hausdorff--Cantelli lemma (Lemma \ref{CB}) we need a sequence of coverings for the sets $\widehat{D}_{m,n}(\psi)^{c}$ and $\widehat{D}^{\bb}_{m,n}(\psi)^{c}.$ Recall that we are considering the supremum norm $\|\cdot\|$ on $[0,1]^{mn}$ and let $\lambda_{j}(\Lambda)$ denote the $j$-th successive minimum of a lattice $\Lambda\subseteq\R^{d}$ i.e. the infimum of $\lambda$ such that the ball $B^{\R^{d}}_{\lambda}(0)$ contains $j$ independent vectors of $\Lambda.$ Then:
\begin{pro}[Kim--Kim, {\cite[Proposition 3.6]{KiKi_20}}]\label{KKP3.6}
Let $C_{0}$ be the same constant as in \eqref{alDI} and \eqref{alDI1}. For $t\in\N,$ let $Z_{t}:=\{   A\in[0,1]^{mn}:\log(d\lambda_{d}(a_{t}\Lambda_{A}))\geq z_{\psi}(t)-C_{0} \}.$ Then $Z_{t}$ can be covered with $Ke^{(m+n)(t-z_{\psi}(t))}$ balls in $X^{mn}=M_{m,n}(\R)$ of radius 
$\frac{1}{2}e^{-(\frac{1}{m}+\frac{1}{n})t}$ for a constant $K>0$ not depending on $t.$
\end{pro}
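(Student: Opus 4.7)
The plan is to combine a perturbation argument, showing that $Z_t$ is a union of boxes of the prescribed radius, with a counting argument that bounds the number of such boxes via integer data associated to short lattice vectors.

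First, I would establish a perturbation principle. If $A,A'\in [0,1]^{mn}$ with $\|A-A'\|_\infty\leq \tfrac{1}{2}e^{-(1/m+1/n)t}$, then for any $\qq\in\Z^n$ with $\|\qq\|_\infty\leq e^{t/n}$,
\begin{equation*}
e^{t/m}\|(A-A')\qq\|_\infty\leq e^{t/m}\cdot\tfrac{1}{2}e^{-(1/m+1/n)t}\cdot e^{t/n}=\tfrac{1}{2}.
\end{equation*}
Combined with the analogous bound on the $\qq$-component (which is unchanged under the perturbation), this shows that the lattices $a_t\Lambda_A$ and $a_t\Lambda_{A'}$ are within bounded Hausdorff distance on any fixed ball about the origin, so $|\Delta(a_t\Lambda_A)-\Delta(a_t\Lambda_{A'})|=O(1)$. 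After absorbing this into $C_0$, membership in $Z_t$ is stable under such perturbations, and so $Z_t$ is a union of closed balls of radius $\tfrac{1}{2}e^{-(1/m+1/n)t}$.

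Second, I would bound the number of such balls. Since $a_tu_A\in G_d$, the lattice $a_t\Lambda_A$ is unimodular, and Minkowski's second theorem gives $\lambda_1\cdots\lambda_d\asymp 1$ for its successive minima. The hypothesis $\lambda_d(a_t\Lambda_A)\gtrsim e^{z_\psi(t)-C_0}$ then forces $\prod_{j=1}^{d-1}\lambda_j\lesssim e^{-(z_\psi(t)-C_0)}$. For each $A\in Z_t$ I would select $d-1$ linearly independent short vectors $v_j$ in $a_t\Lambda_A$ with $\|v_j\|\asymp\lambda_j$; each comes from an integer pair $(\pp_j,\qq_j)\in\Z^m\times\Z^n$ satisfying
\begin{equation*}
\|\qq_j\|\lesssim e^{t/n}\lambda_j,\qquad \|A\qq_j-\pp_j\|\lesssim e^{-t/m}\lambda_j.
\end{equation*}
A direct enumeration of the admissible tuples of integer pairs, using the product bound on the $\lambda_j$'s, yields the target count $Ke^{(m+n)(t-z_\psi(t))}$; the consistency check is that the resulting Lebesgue volume $Ke^{(m+n)(t-z_\psi(t))}\cdot e^{-(m+n)t}=Ke^{-(m+n)z_\psi(t)}$ matches the expected measure of the set of lattices with $\Delta\geq z_\psi(t)$.

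The main obstacle is the combinatorial bookkeeping in the second step: the count of admissible $\pp_j$ per $\qq_j$ behaves differently depending on whether $e^{-t/m}\lambda_j$ is smaller or larger than $1$, so several regimes must be handled separately. This is precisely the content of Kim--Kim's original Proposition 3.6, and I would follow their argument without essential modification.
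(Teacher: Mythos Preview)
The paper does not supply its own proof of this proposition; it is quoted verbatim as \cite[Proposition~3.6]{KiKi_20} and used as a black box. Your proposal is therefore consistent with the paper's treatment: you sketch the two natural ingredients (a perturbation/Lipschitz step showing $Z_t$ is a union of balls at the stated scale, and a lattice-point count via Minkowski's second theorem), and you explicitly defer the combinatorics to Kim--Kim, exactly as the paper does.

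One small slip worth noting: in your perturbation step you conclude $|\Delta(a_t\Lambda_A)-\Delta(a_t\Lambda_{A'})|=O(1)$, but $Z_t$ is defined via $\lambda_d$, not via $\Delta$ (which concerns the shortest vector). The computation you wrote actually shows that $a_t u_{A'-A} a_t^{-1}$ lies in a fixed compact neighbourhood of the identity in $G_d$, so \emph{all} successive minima of $a_t\Lambda_A$ and $a_t\Lambda_{A'}$ are comparable up to a multiplicative constant; in particular $|\log\lambda_d(a_t\Lambda_A)-\log\lambda_d(a_t\Lambda_{A'})|=O(1)$, which is what you need. With that correction the outline matches the argument in \cite{KiKi_20}.
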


We are now in a position to prove the following statement.
\begin{pro}\label{BS3.8}
Let $mn-n< s\leq mn.$ If  $\sum\limits_{q=1}^{\infty} \frac{1}{\psi(q)q^{2}} \left( \frac{ q^{\frac{1}{n}}}{\psi(q)^\frac{1}{m}}\right)^{mn} f\left( \frac{\psi(q)^\frac{1}{m}}{ q^{\frac{1}{n}}}\right)<\infty, $ then $\mathcal{H}^{f}( \limsup\limits_{t\to\infty}  Z_{t}    )=0$
and 
$\mathcal{H}^{f+m}( \limsup\limits_{t\to\infty}  Z_{t} \times [0,1]^{m})=0.$ $($Note that $\mathcal H^{f+m}$ represents the Hausdorff measure of a set when we take $(f+m)(r)=r^{m}f(r)).$
\end{pro}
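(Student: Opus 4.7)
The plan is to combine Proposition~\ref{KKP3.6} (which provides an efficient cover of $Z_t$) with Lemma~\ref{BS1} (which translates the hypothesis into a dynamical summability condition) and then apply the Hausdorff--Cantelli lemma (Lemma~\ref{CB}).

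For the first statement, I would begin by recording that $\frac{1}{m}+\frac{1}{n}=\frac{m+n}{mn}$, so Proposition~\ref{KKP3.6} supplies a cover of $Z_t$ by $N_t := K e^{(m+n)(t-z_\psi(t))}$ balls of diameter $r_t := e^{-(m+n)t/(mn)}$. Summing $f$ of the diameters across all $t \in \N$ with $t \geq \lceil t_0\rceil$ gives
\begin{equation*}
\sum_{t} N_t\, f(r_t)
\;=\; K\sum_{t} e^{(m+n)t\,-\,(m+n)z_\psi(t)}\, f\!\left(e^{-(m+n)t/(mn)}\right),
\end{equation*}
which is exactly (up to comparison of sums with integrals by monotonicity) the right-hand expression in Lemma~\ref{BS1}. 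The hypothesis therefore ensures this total is finite, so Lemma~\ref{CB} applied to the family $\{B_{t,i}\}$ of all balls in all covers yields $\mathcal{H}^{f}(\limsup_{t\to\infty} Z_t)=0$.

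For the product statement, I would use $\limsup_t (Z_t \times [0,1]^m) = (\limsup_t Z_t) \times [0,1]^m$ and promote each cover to one of the product. Specifically, cover $[0,1]^m$ by $M_t \leq \lceil 1/r_t\rceil^m \lesssim r_t^{-m}$ closed cubes of side $r_t$; then the products $B_{t,i}\times C_{t,j}$ form a cover of $Z_t\times[0,1]^m$ in $\R^{mn+m}$ (endowed with the supremum norm) by $N_tM_t$ sets of diameter $\lesssim r_t$. The crucial cancellation is
\begin{equation*}
N_tM_t \cdot (f+m)(r_t) \;\lesssim\; N_t\, r_t^{-m}\cdot r_t^m f(r_t) \;=\; N_t\, f(r_t),
\end{equation*}
so the summability reduces to the same series that was already shown to converge. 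A second application of Lemma~\ref{CB} then gives $\mathcal{H}^{f+m}(\limsup_t (Z_t\times[0,1]^m))=0$.

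The serious computational effort has already been carried out in Lemma~\ref{BS1}; once that dictionary between the two series is in hand, the remainder is a clean covering argument, and the only subtlety is recognising that the normalisation $(f+m)(r)=r^m f(r)$ is precisely engineered so that thickening by the $m$-dimensional cube $[0,1]^m$ costs nothing. The mild points requiring care are the discretisation (Proposition~\ref{KKP3.6} is stated for $t\in\N$, matching the sum over integers in the hypothesis) and the comparability between the discrete sums and integrals in Lemma~\ref{BS1}, which holds because $\psi$ (hence $z_\psi$) is monotone so that the relevant integrands vary regularly between consecutive integers.
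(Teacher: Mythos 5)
Your argument matches the paper's proof essentially step for step: apply Lemma~\ref{BS1} to rephrase the hypothesis as finiteness of $\sum_t e^{(m+n)(t-z_\psi(t))}f(e^{-(m+n)t/(mn)})$, use Proposition~\ref{KKP3.6} to cover $Z_t$ by $N_t\lesssim e^{(m+n)(t-z_\psi(t))}$ balls of diameter $e^{-(m+n)t/(mn)}$, apply Lemma~\ref{CB}, and for the product thicken by $\lesssim r_t^{-m}$ cubes so that the factor $r_t^{-m}\cdot r_t^m$ in $(f+m)(r_t)$ cancels. The paper states the product cover count as $Ke^{\frac{m+n}{n}t}e^{(m+n)(t-z_\psi(t))}$, which is exactly your $N_tM_t$, so your proof is correct and the same as the paper's.
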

\begin{proof}
By Lemma \ref{BS1}, the assumption $\sum\limits_{q=1}^{\infty} \frac{1}{\psi(q)q^{2}} \left( \frac{ q^{\frac{1}{n}}}{\psi(q)^\frac{1}{m}}\right)^{mn} f\left( \frac{\psi(q)^\frac{1}{m}}{ q^{\frac{1}{n}}}\right)<\infty$  is equivalent to 
\begin{align}
& \sum\limits_{t=1}^{\infty}e^{-(m+n)(z(t)-t)}f(e^{-(\frac{1}{m}+\frac{1}{n})t})<\infty.
\end{align}
For each $t\in\N,$ let $D_{t,{1}}, D_{t,{2}},\cdots,D_{t,{p_{t}}}$ be the balls of radius $\frac{1}{2}e^{-(\frac{1}{m}+\frac{1}{n})t}$ covering $Z_{t}$ as in Proposition \ref{KKP3.6}. Note that $p_{t},$ the number of the balls, is not greater than $Ke^{(m+n)(t-z_{\psi}(t))}$ by Proposition \ref{KKP3.6}. By applying Lemma \ref{CB} to the sequence of balls $\{D_{t,{j}}\}_{t\in\N,1\leq j \leq p_{t}},$ we have $\mathcal{H}^{f}( \limsup\limits_{t\to\infty}  Z_{t}    )\leq\mathcal{H}^{f}( \limsup\limits_{t\to\infty}  D_{t_{j}}    )=0.$

We prove the second statement by a similar argument. Proposition \ref{KKP3.6} implies that $Z_{t}\times[0,1]^{m}$ can be covered with $Ke^{\frac{m+n}{n}t}e^{(m+n)(t-z_{\psi}(t))}$ balls of radius $\frac{1}{2}e^{-(\frac{1}{m}+\frac{1}{n})t}.$ Applying Lemma \ref{CB} again, we have $\mathcal{H}^{f
+m}( \limsup\limits_{t\to\infty}  Z_{t} \times [0,1]^{m})=0.$
\end{proof}

The convergence parts of Theorems \ref{BST1} and \ref{BST2} follow from this proposition. We will adapt a similar method as in \cite{KiKi_20}.
\begin{proof}
We first prove the singly metric case i.e., the convergent part of Theorem \ref{BST2}. We claim that $\log
(d\lambda_{d}(a_{t}\Lambda_{A}))\geq\Delta(a_{t}\Lambda_{A,\bb})$ for every $\bb\in\R^{m}.$  Let $v_{1},\dots,v_{d}$ be linearly independent vectors satisfying $\|v_{i}\|\leq\Lambda_{d}(a_{t}\Lambda_{A})$ for $1\leq i \leq d.$ The shortest vector of $a_{t}\Lambda_{A,\bb}$ can be written as a form of $\sum_{1}^{d}\alpha_{i}v_{i}$ for some $-1\leq\alpha_{i}\le1$, so the length of the shortest vector is less than $\sum_{1}^{d}\|v_{i}\|.$ Thus, $ \Delta(a_{t}\Lambda_{A,\bb})\leq\log\sum\limits_{i}^{d}\|v_{i}\|\leq \log(d\lambda_{d}(a_{t}\Lambda_{A})).$ This implies  $\widehat{D}^{\bb}_{m, n}(\psi)^{c}\subseteq\limsup\limits_{t\to\infty}  \{ A\in[0,1]^{mn}:\Delta(a_{t}\Lambda_{A,\bb})\geq z_{\psi}(t)-C_{0}\}\subseteq \limsup\limits_{t\to\infty}  Z_{t} $ by Lemma \ref{KKL2.3} and Proposition \ref{BS3.8}, thus we obtain $\mathcal{H}^{f}(  \hat{D}^{\bb}_{m, n}(\psi)^{c} )\leq\mathcal{H}^{f}( \limsup\limits_{t\to\infty}  Z_{t}    )=0$.

Similarly for the doubly metric case, together with the second statement of Proposition \ref{BS3.8}, $\widehat{D}_{m, n}(\psi)^{c} \subseteq \limsup\limits_{t\to\infty} \{   (A,\bb)\in[0,1]^{mn+m}:   \Delta(a_{t}\Lambda_{A,\bb})\geq z_{\psi}(t)-C_{0}\}\subseteq \limsup\limits_{t\to\infty}Z_{t}\times[0,1]^{m}$ provides the proof of the convergent part of Theorem \ref{BST1}.
 \end{proof}

\section{Proof of Theorem \ref{BST1} and \ref{BST2}: the divergent case}\label{Sec4}
Recall that $d=m+n$ and assume that $\psi:[T_{0},\infty)\to\R_{+}$ is a decreasing function satisfying $\lim_{T\to\infty}\psi(T)=0.$ Denote by $\|\cdot\|_{\Z}$ and $|.|_{\Z}$ the distance to the nearest integer vector and number, respectively. Define the function $\widetilde{\psi}:[S_{0},\infty)\to\R_{+}$ by 
$$\widetilde{\psi}(S)=(\psi^{-1}(S^{-m})))^{\frac{-1}{n}}$$
where $S_{0}=\psi(T_{0})^{\frac{-1}{m}}.$ The next lemma associates $\psi$-Dirichlet non-improvability with $\widetilde{\psi}$-approximability via a transference lemma as follows. 
\begin{lem}\cite[Lemma 4.2]{KiKi_20}\label{Transf}
Given $(A,\bb)\in X^{mn}\times\R^{m},$ if the system 
\begin{equation*}
\|A^{t}\xx\|_{\Z}<d^{-1}|\bb\cdot \xx|_{\Z}\widetilde{\psi}(S) \text{ and } \|\xx\|<d^{-1}|\bb\cdot \xx|_{\Z}S
\end{equation*}
has a nontrivial solution $\xx\in\Z^{m}$ for an unbounded set of $S\geq S_{0},$ then $(A,\bb)\in \widehat{D}_{m,n}(\psi)^{c}.$
\end{lem}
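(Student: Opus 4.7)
The plan is to apply the classical transference ``inner product trick'' of Khintchine. First, reparametrize by setting $T := \psi^{-1}(S^{-m})$, so that $\psi(T) = S^{-m}$ and $\widetilde\psi(S) = T^{-1/n}$; as $\psi$ is nonincreasing with $\psi(T)\to 0$, this sends the unbounded set of $S$'s from the hypothesis to an unbounded set of $T$'s in $[T_0,\infty)$. Arguing by contradiction, suppose $(A,\bb)\in \widehat{D}_{m,n}(\psi)$: for every such $T$ there must exist $\pp\in\Z^m$ and $\qq\in\Z^n$ with $\|A\qq+\bb-\pp\| < S^{-1}$ and $\|\qq\| < \widetilde\psi(S)^{-1}$, while simultaneously the lemma's hypothesis supplies a nonzero $\xx\in\Z^m$ meeting the dual bounds. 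Note that since $\xx\neq 0$ is an integer vector, $\|\xx\|\geq 1$, so the strict inequality $\|\xx\| < d^{-1}|\bb\cdot\xx|_\Z S$ forces $|\bb\cdot\xx|_\Z > 0$.

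The core step is the standard identity. Choose $\rr\in\Z^n$ nearest to ${}^tA\xx$ and set $\mathbf{e}:={}^tA\xx-\rr$, so $\|\mathbf{e}\|=\|{}^tA\xx\|_\Z$. Expanding $\xx\cdot(A\qq+\bb-\pp) = ({}^tA\xx)\cdot\qq + \xx\cdot\bb - \xx\cdot\pp = \mathbf{e}\cdot\qq + \rr\cdot\qq + \xx\cdot\bb - \xx\cdot\pp$ and using $\rr\cdot\qq,\xx\cdot\pp\in\Z$ gives
\[ \xx\cdot\bb \;\equiv\; \xx\cdot(A\qq+\bb-\pp)-\mathbf{e}\cdot\qq \pmod{1}. \]
Now combine the primal and dual bounds via the elementary estimate $|\yy\cdot\zz|\leq k\|\yy\|\|\zz\|$ for $k$-vectors under the sup norm:
\[ |\xx\cdot(A\qq+\bb-\pp)| \leq m\|\xx\|\cdot\|A\qq+\bb-\pp\| < \tfrac{m}{d}|\bb\cdot\xx|_\Z, \]
\[ |\mathbf{e}\cdot\qq| \leq n\|{}^tA\xx\|_\Z\cdot\|\qq\| < \tfrac{n}{d}|\bb\cdot\xx|_\Z. \]
Their sum is strictly less than $|\bb\cdot\xx|_\Z\leq \tfrac12$, so the right-hand side of the congruence above has absolute value less than $\tfrac12$, hence coincides with its own distance to the nearest integer. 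The triangle inequality then yields $|\xx\cdot\bb|_\Z < |\bb\cdot\xx|_\Z$, a contradiction.

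Since the contradiction arises for every $T$ in an unbounded set, while $\psi$-Dirichlet improvability demands solvability of the primal system for \emph{all} sufficiently large $T$, we conclude $(A,\bb)\in\widehat{D}_{m,n}(\psi)^c$. The only real obstacle is bookkeeping the dimension constants $m$ and $n$ in the sup-norm inner-product inequality so that the factor $d^{-1}=(m+n)^{-1}$ baked into the hypothesis makes the two bounds sum to exactly $|\bb\cdot\xx|_\Z$, producing a strict inequality; no deeper idea beyond the Khintchine transference principle is needed.
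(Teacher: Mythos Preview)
The paper does not supply its own proof of this lemma; it is simply quoted with attribution to Cassels \cite{Ca_57}. Your argument is correct and is exactly the classical Khintchine ``inner product'' transference that Cassels records: pair the primal approximation $A\qq+\bb-\pp$ with the dual vector $\xx$, reduce $\xx\cdot\bb$ modulo $1$ via the identity $\xx\cdot(A\qq+\bb-\pp)=({}^{t}A\xx)\cdot\qq+\xx\cdot\bb-\xx\cdot\pp$, and use the sup-norm bound $|\yy\cdot\zz|\le k\|\yy\|\,\|\zz\|$ in dimensions $m$ and $n$ so that the two error terms sum to strictly less than $\tfrac{m}{d}|\bb\cdot\xx|_{\Z}+\tfrac{n}{d}|\bb\cdot\xx|_{\Z}=|\bb\cdot\xx|_{\Z}$. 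Nothing more is needed.

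One small expository remark: your final paragraph mixes the global ``suppose $(A,\bb)\in\widehat D_{m,n}(\psi)$'' contradiction with a local ``for every $T$ in an unbounded set'' argument. The cleanest phrasing is the direct one: for each $S$ in the unbounded set from the hypothesis, put $T=\psi^{-1}(S^{-m})$ and show that the primal system $\|A\qq+\bb-\pp\|^{m}<\psi(T)$, $\|\qq\|^{n}<T$ has \emph{no} solution (since any putative solution would yield $|\bb\cdot\xx|_{\Z}<|\bb\cdot\xx|_{\Z}$). As $T\to\infty$ along this set, improvability fails on an unbounded set of $T$, hence $(A,\bb)\in\widehat D_{m,n}(\psi)^{c}$. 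This is only a matter of presentation; the mathematics is already right.
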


Following \cite{KiKi_20} we adopt some notations.
Let $W_{S,\varepsilon}$ be the set of $A\in[0,1]^{mn}$ such that there exists
$\xx_{A,S}\in\Z^{m}\setminus \{\0\}$ satisfying 
$$\|A^{t}\xx_{A,S}\|_{\Z}<d^{-1}\varepsilon \widetilde{\psi}(S) \text{ and } \|\xx_{A,S}\|<d^{-1}\varepsilon S$$
and let
$$\widehat{W}_{S,\varepsilon}:=\{ (A,\bb)\in [0,1]^{mn+m}: A\in W_{S,\varepsilon }\text{ and }  |\bb\cdot\xx_{A,S}|_{\Z}>\varepsilon \}.$$

For fixed $\bb\in\R^{m},$ consider the set $W_{\bb,S,\varepsilon}$ of matrices $A\in[0,1]^{mn}$
such that there exists $\xx\in\Z^{m}\setminus\{\0\}$ satisfying 
\begin{itemize}
\item  $|\bb\cdot\xx|_{\Z}>\varepsilon$
\item $\|A^{t}\xx\|_{\Z}<d^{-1}\varepsilon\widetilde{\psi}(S)$ and $\|\xx\|<d^{-1}\varepsilon S.$
\end{itemize}
 Let $W_{\bb,\varepsilon}:=\limsup\limits_{S\to\infty}W_{\bb,S,\varepsilon}.$ Note that $A\in W_{S,\varepsilon}$ if and only if
$$\|A^{t}\xx_{A,S}\|_{\Z}<\Psi_{\varepsilon}(U) \text{ and } \|\xx_{A,S}\|<U \text{ for some } \xx_{A,S},$$
where 
\begin{equation}
\Psi_{\varepsilon}(U):=d^{-1}\varepsilon\widetilde{\psi}(d\varepsilon^{-1}U), \;\;\; U = d^{-1} \varepsilon S.
\end{equation}
By Lemma \ref{Transf} $\limsup\limits_{S\to\infty}\widehat{W}_{S,\varepsilon}\subseteq \widehat{D}_{m,n}(\psi)^{c}$ and $W_{\bb,\varepsilon}\subseteq \widehat{D}^{\bb}_{m,n}(\psi)^{c}.$

Further $\limsup\limits_{S\to\infty}{W}_{S,\varepsilon}=\{A\in[0,1]^{mn}: A^{t}\in W_{n,m}(\Psi_{\varepsilon})\}$ is the set of matrices whose transposes are $\Psi_{\varepsilon}$-approximable. From here onwards we use a slightly different definition of $\Psi_{\varepsilon}$-approximability; recall from footnote \ref{FT1} where the inequality $\|A^{t}\xx\|_{\Z}<\Psi_{\varepsilon}(\|\xx\|)$ is used instead of \eqref{AS2.1}. Then, $W_{\bb,\varepsilon}$ can be considered as the set of matrices whose transposes are $\Psi_{\varepsilon}$-approximable with solutions restricted on the set $\{\xx\in\Z^{m}:|\bb\cdot\xx|_{\Z}>\varepsilon\}.$

\subsection{Mass distributions on $\Psi_{\varepsilon}$-approximable matrices}
In this subsection we prove the divergent part of Theorem \ref{BST1} using mass distributions on $\Psi_{\varepsilon}$-approximable matrices following \cite{AlBe_18}.
\begin{lem}\label{BS4.2}
For each $mn-n< s\leq mn$ and $0<\varepsilon< 1/2,$ let $U_{0}=d^{-1}\varepsilon S_{0}$ and f be a dimension function satisfying \eqref{C1} and \eqref{C2}. Suppose that \eqref{C05} holds.
 Then
\begin{equation*}
\sum\limits_{q=\lceil{T_{0}}\rceil}^{\infty} \frac{1}{\psi(q)q^{2}} \left( \frac{ q^{\frac{1}{n}}}{\psi(q)^\frac{1}{m}}\right)^{mn} f\left( \frac{\psi(q)^\frac{1}{m}}{ q^{\frac{1}{n}}}\right)<\infty 
\iff  \sum\limits_{h=\lceil{U_{0}}\rceil}^{\infty}h^{m+n-1}\Big(\frac{\Psi_{\varepsilon}(h)}{h}     \Big)^{-n(m-1)}f \Big(\frac{\Psi_{\varepsilon}(h)}{h}     \Big)  <\infty.
\end{equation*}
\end{lem}
\begin{proof}
Similar to Lemma \ref{BS1}, we may replace the sums with integrals
\begin{equation*}
\int^{\infty}_{T_{0}}\frac{1}{\psi(x)x^{2}} \left( \frac{ x^{\frac{1}{n}}}{\psi(x)^\frac{1}{m}}\right)^{mn} f\left( \frac{\psi(x)^\frac{1}{m}}{ x^{\frac{1}{n}}}\right)dx \text{ and } 
\int\limits_{U_0}^{\infty}h^{m+n-1}\Big(\frac{\Psi_{\varepsilon}(h)}{h}     \Big)^{-n(m-1)}f \Big(\frac{\Psi_{\varepsilon}(h)}{h}     \Big)dh,
 \end{equation*}
respectively.

Note that since $\Psi_{\varepsilon}(h)=d^{-1}\varepsilon\widetilde\psi(d\varepsilon^{-1}h),$ if we consider the term $\int\limits_{U_0}^{\infty}h^{m+n-1}\Big(\frac{\Psi_{\varepsilon}(h)}{h}     \Big)^{-n(m-1)}f \Big(\frac{\Psi_{\varepsilon}(h)}{h}     \Big)dh,$ then
\begin{equation*}
\int\limits_{U_0}^{\infty}h^{m+n-1}\Big(\frac{\Psi_{\varepsilon}(h)}{h}     \Big)^{-n(m-1)}f \Big(\frac{\Psi_{\varepsilon}(h)}{h}     \Big) dh  <\infty
\iff  \int\limits_{S_0}^{\infty}y^{m+n-1}\Big(\frac{\widetilde\psi(q)}{y}     \Big)^{-n(m-1)}f \Big(\frac{\widetilde\psi(q)}{y}     \Big) dy  <\infty.
\end{equation*}
Also, since $\widetilde\psi(y)=\psi^{-1}(y^{-m})^{-\frac{1}{n}},$ we have
\begin{align}
\int^{\infty}_{S_{0}}y^{m+n-1}  \left(\frac{\widetilde\psi(y)}{y}     \right)^{-n(m-1)}f \left(\frac{\widetilde\psi(y)}{y}     \right)dy&=\int^{\infty}_{S_{0}} y^{mn+m-1}(\psi^{-1}(y^{-m}))^{m-1} f\left( \frac{(\psi^{-1}(y^{-m}))^{-\frac{1}{n}}}{y}\right)dy \notag
\\&=\frac{1}{m}\int_{S^{m}_{0}}^{\infty}t^{n} (\psi^{-1}(t^{-1}))^{m-1}f\left( \frac{(\psi^{-1}(t^{-1}))^{-\frac{1}{n}}}{t^{\frac{1}{m}}}\right)dt \notag \\
&= \frac{1}{m}\int_{\psi^{-1}(S^{-m}_{0})}^{\infty}x^{m-1} (\psi(x)^{-1})^{n}f\left( \frac{x^{-\frac{1}{n}}}{(\psi(x)^{-1})^{\frac{1}{m}}}\right)d\psi(x)^{-1} \notag
\\
&\asymp \frac{1}{m}\left(   n-\frac{s}{m} +1\right)^{-1}\int^{\infty}_{T_{0}}x^{m-1}f(x^{-\frac{1}{n}}) d(\psi(x)^{-1})^{n-\frac{s}{m}+1},\notag
 \end{align}
where in the second last line we used the change of variables $x= \psi^{-1}(t^{-1}),$ $t=\psi(x)^{-1}$ and in the last line we used \eqref{C1} and \eqref{C05}.
Since it follows from \eqref{C05} that
$(x^{-\frac{1}{n}})^{\alpha}\leq(\psi(x)^{-1})^{-\frac{1}{m}} \leq (x^{-\frac{1}{n}})^\frac{1}{\alpha}.$
Therefore by using \eqref{C1} we can write 
$$  f((\psi(x)^{-1})^{-\frac{1}{m}}x^{-\frac{1}{n}})  \asymp (\psi(x)^{-1})^{-\frac{s}{m}}f(x^{-\frac{1}{n}}). $$

Using integration by parts 
\begin{align}\label{sumT}
&\int^{\infty}_{T_{0}}x^{m-1}f(x^{-\frac{1}{n}}) d(\psi(x)^{-1})^{n-\frac{s}{m}+1}\notag\\
&=(\lim_{x\to\infty}x^{m-1}\psi(x)^{-n-1+\frac{s}{m}}f(x^{-\frac{1}{n}})-T^{m-1}_{0}\psi(T_{0})^{-n-1+\frac{s}{m}}f(T_{0}^\frac{-1}{n}) )\notag\\
&+ \int_{T_{0}}^{\infty}\left[-(m-1)x^{m-2}f(x^{-\frac{1}{n}})+\frac{1}{n}  x^{m-1}x^{-\frac{1}{n}-1}f'(x^{-\frac{1}{n}})\right]\psi(x)^{-n-1+\frac{s}{m}}dx\notag\\
&=\lim_{x\to\infty}x^{m-1}\psi(x)^{-n-1+\frac{s}{m}}f(x^{-\frac{1}{n}})-T^{m-1}_{0}\psi(T_{0})^{-n-1+\frac{s}{m}}f(T_{0}^\frac{-1}{n}) \notag\\
&+\int_{T_{0}}^{\infty}\left[-(m-1)+\frac{1}{n}a(x^{-\frac{1}{n}})\right]x^{m-2}f(x^{-\frac{1}{n}})\psi(x)^{-n-1+\frac{s}{m}}dx, \text{ by \eqref{C2}} \notag\\
&\asymp \lim_{x\to\infty}x^{m-1}\psi(x)^{-n-1} f\left(\frac{\psi(x)^{\frac{1}{m}}}{x^{\frac{1}{n}}}\right) -T^{m-1}_{0}\psi(T_{0})^{-n-1+\frac{s}{m}}f\left(\frac{\psi(T_{0})^{\frac{1}{m}}}{T_{0}^{\frac{1}{n}}}\right) \notag\\
&+
\int_{T_{0}}^{\infty}\left[\frac{1}{n}a(x^{-\frac{1}{n}}) - (m-1)\right]x^{m-2}\psi(x)^{-n-1}f\left(\frac{\psi(x)^{\frac{1}{m}}}{x^{\frac{1}{n}}}\right)dx.  \notag
\end{align}
Note that 
\begin{align}
&\int_{T_{0}}^{\infty}x^{m-2}\psi(x)^{-n-1}f\left(\frac{\psi(x)^{\frac{1}{m}}}{x^{\frac{1}{n}}}\right)dx
=\int_{T_{0}}^{\infty}x^{m-1}\psi(x)^{-n-1}f\left(\frac{\psi(x)^{\frac{1}{m}}}{x^{\frac{1}{n}}}\right)d\log x.
\end{align}

Thus the convergence of $\int^{\infty}_{T_{0}}x^{m-2}\psi(x)^{-1-n}f\left(\frac{\psi(x)^{\frac{1}{m}}}{x^{\frac{1}{n}}}\right)dx$ gives that 
\begin{equation*}\lim_{x\to\infty}x^{m-1}\psi(x)^{-n-1} f\left(\frac{\psi(x)^{\frac{1}{m}}}{x^{\frac{1}{n}}}\right)<\infty. 
\end{equation*}

Also observe that as $x\to\infty,$ $a(x^\frac{-1}{n})\to s.$ Therefore $$\frac{1}{n}a(x^\frac{-1}{n})-(m-1)\to \frac{s-n(m-1)}{n}$$
which is finite and positive (since $s>mn-n$). Therefore the convergence of $\int^{\infty}_{T_{0}}x^{m-2}\psi(x)^{-1-n}f\left(\frac{\psi(x)^{\frac{1}{m}}}{x^{\frac{1}{n}}}\right)dx$ gives the convergence of 

\begin{equation*}\frac{1}{n}
\int_{T_{0}}^{\infty}a(x^{-\frac{1}{n}})x^{m-2}\psi(x)^{-n-1}f\left(\frac{\psi(x)^{\frac{1}{m}}}{x^{\frac{1}{n}}}\right)dx   -(m-1)\int_{T_{0}}^{\infty}x^{m-2}\psi(x)^{-n-1} f\left(\frac{\psi(x)^{\frac{1}{m}}}{x^{\frac{1}{n}}}\right) dx. 
\end{equation*}
Hence the convergence of 
\begin{equation*}
\int^{\infty}_{T_{0}}\frac{1}{\psi(x)x^{2}} \left( \frac{ x^{\frac{1}{n}}}{\psi(x)^\frac{1}{m}}\right)^{mn} f\left( \frac{\psi(x)^\frac{1}{m}}{ x^{\frac{1}{n}}}\right)dx \text{ or } \int^{\infty}_{S_{0}} y^{m+n-1}  \left(\frac{\widetilde\psi(y)}{y}     \right)^{-n(m-1)}f \left(\frac{\widetilde\psi(y)}{y}     \right)dy,     \end{equation*}
implies the convergence of other one since for $T_{0}$ large enough all summands in \eqref{sumT} are positive except the finite value
\[
-T^{m-1}_{0}\psi(T_{0})^{-n-1+\frac{s}{m}}f\left(\frac{\psi(T_{0})^{\frac{1}{m}}}{T_{0}^{\frac{1}{n}}}\right).
\]
\end{proof}

\begin{lem}[{\cite[Section 5]{AlBe_18}}]\label{AlBe}
Assume that 
$\sum\limits_{q=1}^{\infty} \frac{1}{\psi(q)q^{2}} \left( \frac{ q^{\frac{1}{n}}}{\psi(q)^\frac{1}{m}}\right)^{mn} f\left( \frac{\psi(q)^\frac{1}{m}}{ q^{\frac{1}{n}}}\right)=\infty.$ Fix $0<\varepsilon<\frac{1}{2}.$ Then, for any $\eta>1$ there exists a probability measure $\mu$ on $\limsup W_{S,\varepsilon}$ satisfying the condition that for an arbitrary ball $D$ of sufficiently small radius $r(D)$ we have 
$$\mu(D)\ll\frac{f(r(D))}{\eta},$$
where the implied constant does not depend on $D$ or $\eta.$
\end{lem}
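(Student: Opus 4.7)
The plan is to derive Lemma~\ref{AlBe} not from the statement of Theorem~\ref{AlBeJT} but from its constructive proof in [AlBe\_18, Section~5], because what is needed is an explicit mass-distributing measure with a quantitative upper bound on balls, not merely fullness of the $f$-Hausdorff measure.

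First I would translate the standing hypothesis. By Lemma~\ref{BS4.2}, the divergence assumption is equivalent to
\begin{equation*}
\sum_{h=\lceil U_{0}\rceil}^{\infty} h^{m+n-1}\Big(\frac{\Psi_{\varepsilon}(h)}{h}\Big)^{-n(m-1)} f\Big(\frac{\Psi_{\varepsilon}(h)}{h}\Big)=\infty,
\end{equation*}
which is precisely the divergence hypothesis of Theorem~\ref{AlBeJT} applied with the dimensions $(n,m)$ (because we are approximating transposes), with approximating function $\Psi_{\varepsilon}$ in the alternative convention of footnote~\ref{FT1}, and with auxiliary dimension function $g(r)=r^{-m(n-1)}f(r)$. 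By the definition of $W_{S,\varepsilon}$, the set $\limsup_{S\to\infty} W_{S,\varepsilon}$ coincides with $\{A\in[0,1]^{mn}:\,{}^{t}\!A\in W_{n,m}(\Psi_{\varepsilon})\}$, so we are looking for a measure supported on this transposed $\Psi_{\varepsilon}$-approximable set.

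For the construction itself I would follow the Cantor-type scheme in [AlBe\_18, Section~5]. Choose a rapidly growing sequence of resonant levels $N_{1}<N_{2}<\cdots$ so that between consecutive levels the ubiquitous system bound is available and the partial tail sum is large. At each stage one selects neighbourhoods of rational resonant hyperplanes of controlled thickness, nested inside cells of the previous stage, obtaining a Cantor-like set $K\subset\limsup_{S\to\infty}W_{S,\varepsilon}$. The probability measure $\mu$ is defined by distributing mass uniformly across the cells at each level. A standard counting argument, combining the local ubiquity covering bound at each scale with the Hausdorff--Cantelli-type estimate on how many cells a small ball $D$ can meet, yields an absolute constant $C>0$ such that $\mu(D)\le C\,f(r(D))$ for all sufficiently small balls $D$.

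The factor $\eta^{-1}$ is gained by exploiting the divergence of the tail: for any prescribed $\eta>1$, the starting level $N_{1}$ can be taken so large that the partial sum $\sum_{h=2^{N_{1}}}^{2^{N_{2}}}(\cdot)$ exceeds $C\eta$, which forces the mass on each first-generation cell to be at most $(C\eta)^{-1}$ times the ambient normalisation. This improvement propagates through every subsequent level of the Cantor construction, producing the sharpened bound $\mu(D)\ll f(r(D))/\eta$. The main obstacle is verifying that all the implicit constants in the ubiquity, covering and cell-counting estimates are uniform in the starting level $N_{1}$; otherwise the $\eta$-gain would be silently reabsorbed. Once this uniformity is checked, exactly as in [AlBe\_18, Section~5], the lemma follows.
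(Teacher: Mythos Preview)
Your proposal is correct and follows essentially the same route as the paper: identify $\limsup_{S\to\infty}W_{S,\varepsilon}$ with the transposed $\Psi_{\varepsilon}$-approximable set, invoke Lemma~\ref{BS4.2} to convert the divergence hypothesis into the divergence condition of Theorem~\ref{AlBeJT} for $W_{n,m}(\Psi_{\varepsilon})$, and then appeal to the Cantor-set construction of \cite[Section~5]{AlBe_18} for the measure $\mu$ with the $\eta^{-1}$ gain. The paper's proof is in fact more terse than yours, simply citing that construction without spelling out the role of the starting level; one small slip in your write-up is that after swapping $(m,n)\mapsto(n,m)$ the auxiliary function should read $g(r)=r^{-n(m-1)}f(r)$, consistent with the exponent in the displayed series.
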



\begin{proof}
Note that $\limsup_{S\to\infty}W_{S,\varepsilon}=\{ A\in [0,1]^{mn}: A^{t} \in W_{n,m}  (\Psi_{\varepsilon})   \}.$ By Lemma \ref{BS4.2}

$$\sum\limits_{h=1}^{\infty}h^{m+n-1}\Big(\frac{\Psi_{\varepsilon}(h)}{h}     \Big)^{-n(m-1)}f \Big(\frac{\Psi_{\varepsilon}(h)}{h}     \Big) =\infty,$$
which is the divergent assumption of Theorem \ref{AlBeJT} for $W_{n,m}  (\Psi_{\varepsilon}).$ From the proof of Jarnik's Theorem in \cite{AlBe_18} and the construction of probability measure in \cite[Section 5]{AlBe_18} we can obtain a probability measure $\mu$ on $\limsup_{S\to\infty}W_{S,\varepsilon}$ satisfying the above condition.
\end{proof}
Let us prove the divergent part of Theorem \ref{BST1}.
\begin{proof}
Assume that $mn+m-n<s<mn+m$ and fix $0<\varepsilon<\frac{1}{2}.$ For any fixed $\eta>1,$ let $\mu$ be a probability measure on $\limsup_{S\to\infty}W_{S,\varepsilon}$ as in Lemma \ref{AlBe} with $f(r(D))$ replaced by $r(D)^{-m}f(r(D)).$ 

Here we remark that since $f(r)$ satisfies \eqref{C1} and \eqref{C2} it is not hard to check  that the new function $f^{*}(r):=\frac{f(r)}{r^{m}}$ satisfies conditions \eqref{C1} and \eqref{C2} with $s$ replaced by $s-m.$ 
Indeed, \eqref{C1} (with $s$ replaced by $s-m$) follows since 
$
f^{*}(xy)=\frac{f(xy)}{(xy)^m}\asymp \frac{x^{s}f(y)}{x^{m}y^{m}} 
={x^{s-m}}f^{*}(y),$
 and 
\eqref{C2} (with $s$ replaced by $s-m$) follows since 
$$\frac{rf^{*'}(r)}{f^{*}(r)}=\frac{r}{f^{*}(r)}[r^{-m}f'(r)-mr^{-m-1}f(r)]=\left[r\frac{f'(r)}{f(r)}-m\right] \to ( s-m)\text{ as } r\to 0.$$ 

Now consider the product measure $\nu=\mu \times m_{\R^{m}},$ where $m_{\R^{m}}$ is the canonical Lebesgue measure on $\R^{m}$ and let $\pi_{1}$ and $\pi_{2}$ be the natural projections from $\R^{mn+m}$ to $\R^{mn}$ and $\R^{m},$ respectively. 

For any fixed integer $N\geq1,$ let $V_{S,\varepsilon}=W_{S,\varepsilon}\setminus \bigcup_{k=N}^{S-1}W_{k,\varepsilon}$ and $\widehat{V}_{S,\varepsilon}=\{(A,\bb)\in \widehat {W}_{S,\varepsilon} : A\in V_{S,\varepsilon}\}.$ Then $\nu(\bigcup_{S\geq N}\widehat{W}_{S,\varepsilon})=\nu(\bigcup_{S\geq N}\widehat{V}_{S,\varepsilon})\geq 1-2\varepsilon$, see \cite [p.21]{KiKi_20}.

Since $N\geq 1$ is arbitrary, we have $ \nu(\limsup_{S\to\infty}\widehat{W}_{S,\varepsilon})\ge 1-2\epsilon.$
For an arbitrary ball $B\subseteq \R^{mn+m}$ of sufficiently small radius $r(B),$
we have 
\begin{equation*}
\nu(B)\leq \mu(\pi_{1}(B))\times m_{\R^{m}}(\pi_{2}(B))\ll \frac{f(r(B))}{\eta},
\end{equation*}
where the implied constant does not depend on $B$ or $\eta.$
By using the Mass Distribution Principle i.e. Lemma \ref{MDP} and the Transference Lemma i.e. Lemma \ref{Transf}, we have
\begin{equation*}
\mathcal{H}^{f}(  \widehat{D}_{m, n}(\psi)^{c}       )\geq \mathcal{H}^{f}(  \limsup_{S\to\infty}\widehat{W}_{S,\varepsilon}) \gg  (1-2\varepsilon)\eta,  \end{equation*}
and by letting $\eta\to\infty$ we obtain the desired result.
\end{proof}

\subsection{Local ubiquity for $W_{\bb,\varepsilon}$}

We will use the idea of local ubiquity for $W_{\bb,\varepsilon}$ to prove the divergent part of Theorem \ref{BST2}. Following  \cite{KiKi_20} we define 
\begin{equation}\label{Ubiq1}
\varepsilon(\bb)=\min_{1\leq j\leq m, \ |b_{j}|_{\Z}>0}\frac{|b_{j}|_{\Z}}{4},
\end{equation}
for $\bb=(b_{1},\cdots,b_{m})\in\R^{m}\setminus\Z^{m}.$ 
 Note that $\varepsilon(\bb)>0$ is due to the fact that $\bb\in\R^{m}\setminus\Z^{m}.$  
 
The following lemma is used when we count the number of integral vectors $\zz\in\Z^{m}$ such that 
\begin{equation}\label{Ubiq2}
\left|\bb\cdot\zz\right|_{\Z}\leq \varepsilon(\bb).
\end{equation} 
\begin{lem}[{\cite[Lemma 4.4]{KiKi_20}}]
For $\bb=(b_{1},\cdots,b_{m})\in\R^{m}\setminus\Z^{m},$ let $\varepsilon(\bb)$ be as in \eqref{Ubiq1} and $1\leq i\leq m$ be an index such that $\varepsilon(\bb)=\frac{|b_{i}|_{\Z}}{4}.$ Then, for any $\xx\in\Z^{m},$ at most one of $\xx$ and $\xx+\eee_{i}$ satisfies \eqref{Ubiq2} where $\eee_{i}$ denotes the vector with a $1$ in the $ith$ coordinate and $0$'s elsewhere.
 \end{lem}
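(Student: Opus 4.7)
The plan is to give a short contradiction argument based on the triangle inequality for the nearest-integer distance $|\cdot|_{\Z}$ on $\R/\Z$. The essential idea is that replacing $\xx$ by $\xx+\eee_{i}$ shifts the inner product $\bb\cdot\xx$ by exactly $b_{i}$, so if both $\xx$ and $\xx+\eee_{i}$ satisfied \eqref{Ubiq2} then $b_{i}$ itself would be within $2\varepsilon(\bb)$ of an integer. Since $i$ is chosen precisely so that $\varepsilon(\bb)=|b_{i}|_{\Z}/4$, this gives a contradiction with the positivity of $|b_{i}|_{\Z}$.

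First I would assume for contradiction that both $\xx$ and $\xx+\eee_{i}$ satisfy \eqref{Ubiq2}. Then, using $\bb\cdot(\xx+\eee_{i})-\bb\cdot\xx=b_{i}$ and the triangle inequality for the (semi)norm $|\cdot|_{\Z}$ on $\R/\Z$, I would write
\[
|b_{i}|_{\Z}=\bigl|\bb\cdot(\xx+\eee_{i})-\bb\cdot\xx\bigr|_{\Z}\leq |\bb\cdot\xx|_{\Z}+|\bb\cdot(\xx+\eee_{i})|_{\Z}\leq 2\varepsilon(\bb).
\]
Substituting $\varepsilon(\bb)=|b_{i}|_{\Z}/4$ from the choice of $i$ then yields $|b_{i}|_{\Z}\leq |b_{i}|_{\Z}/2$, and so $|b_{i}|_{\Z}=0$.

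To close the argument I would observe that this contradicts the definition \eqref{Ubiq1}: the minimum there is taken only over coordinates $j$ with $|b_{j}|_{\Z}>0$, and since $\bb\in\R^{m}\setminus\Z^{m}$ the set of such coordinates is nonempty, so in particular $|b_{i}|_{\Z}>0$. There is no real obstacle in this proof; the only small point to verify is that $|\cdot|_{\Z}$ genuinely satisfies the triangle inequality on $\R/\Z$ (which is standard, since it is the quotient metric induced from the Euclidean distance on $\R$), and that the index $i$ is guaranteed to have positive integer-distance, which is built directly into \eqref{Ubiq1}.
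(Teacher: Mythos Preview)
Your argument is correct: the triangle inequality for $|\cdot|_{\Z}$ together with the choice $\varepsilon(\bb)=|b_{i}|_{\Z}/4$ immediately forces $|b_{i}|_{\Z}\leq |b_{i}|_{\Z}/2$, contradicting $|b_{i}|_{\Z}>0$. Note that the present paper does not actually supply a proof of this lemma---it is simply quoted from \cite[Lemma~4.4]{KiKi_20}---so there is nothing to compare against here; your short contradiction is exactly the natural proof one would expect.
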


For a fixed
$\bb\in \R^{m}\setminus\Z^{m},$ let $\varepsilon_{0}:=\varepsilon(\bb),$ $\Psi_{0}:=\Psi_{\varepsilon_{0}}$ and $\Psi(h)=\frac{\Psi_{0}(h)}{h}.$ 
With notions in the Subsection \ref{US}, which are defined for the ubiquitous system construction,
let 
\begin{equation}
J:=\{ (\xx,\yy)\in\Z^{m}\times \Z^{n}: \|\yy\|\leq m\|\xx\| \text{ and } |\bb\cdot \xx|_{\Z}>\varepsilon_{0}\} \text{ and }\end{equation}
\begin{equation}
\text{ for }  \kappa:= (\xx,\yy)\in J \text{ denote } \beta_{\kappa}:=\|\xx\| \text{ and }R_{\kappa}:=\{A\in[0,1]^{mn}:A^{t}\xx=\yy \}.
\end{equation}

Note that $W_{\bb,\varepsilon_{0}}\subset\Lambda(\Psi)$ and the family $\mathcal R$ of resonant sets $R_{\kappa}$ consists of $(m-1)n$-dimensional, rational affine subspaces.

By Lemma \ref{BS4.2}, now we assume that the divergence part of Theorem \ref{BST2} is satisfied. 
Then we can find a strictly increasing sequence of positive integers $\{h_{i}\}_{i\in\N}$ such that 
\begin{equation}\sum\limits_{h_{i-1}<h\leq h_{i}}^{\infty}h^{m+n-1}\Big(\frac{\Psi_{0}(h)}{h}     \Big)^{-n(m-1)}f \Big(\frac{\Psi_{0}(h)}{h}     \Big) >1
\end{equation}
and $h_{i}>2h_{i-1}.$ Put $\omega(h):=i^{\frac{1}{n}}$ if $h_{i-1}<h\leq h_{i}.$ Then

$$\sum\limits_{h=1}^{\infty}h^{m+n-1}\Big(\frac{\Psi_{0}(h)}{h}     \Big)^{-n(m-1)}f \Big(\frac{\Psi_{0}(h)}{h}     \Big) \omega(h)^{-n} =\infty.$$

For a constant $c>0,$ define the ubiquitous function $\rho_{c}:\R_{+}\to\R_{+}$
by 
\begin{equation} 
\rho_{c}(h)=
\begin{cases}
 ch^{-\frac{1+n}{n}}   \  & \text{ if }\quad m=1; \\[2ex] 
ch^{-\frac{m+n}{n}}\omega(h) \  & \text{ if } \quad m\geq 2.
\end{cases}
\end{equation}
Clearly the ubiquitous function is $2$-regular.

\begin{thm}[{\cite[Theorem 4.5]{KiKi_20}}]\label{KKTh4.5}The pair $(\mathcal R,\beta)$ is a locally ubiquitous system relative to $\rho=\rho_{c}$ for some constant $c>0.$ 
\end{thm}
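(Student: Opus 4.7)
The plan is to verify the local ubiquity condition directly: fix an arbitrary ball $B \subseteq [0,1]^{mn}$ and a large integer $N$, and show that a universal positive fraction of $A \in B$ lies within distance $\rho_c(\|\xx\|)$ of some resonant hyperplane $R_{(\xx,\yy)}$ with $(\xx,\yy) \in J$ and $2^{N-1} < \|\xx\| \leq 2^N$.

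The main input would be the transposed version of Dirichlet's linear forms theorem: for every $A \in [0,1]^{mn}$, taking the parameter $T = 2^{Nm}$ produces some $\xx \in \Z^m \setminus \{\0\}$ with $\|\xx\| \leq 2^N$ and $\|{}^t A \xx\|_\Z \leq 2^{-Nm/n}$. Letting $\yy \in \Z^n$ be the nearest integer vector to ${}^t A \xx$, the sup-norm distance from $A$ to $R_{(\xx,\yy)}$ is, up to an absolute constant, at most $\|{}^t A \xx - \yy\|_\infty / \|\xx\| \leq 2^{-N(m+n)/n}$, which matches $\rho_c(2^N)$ when $m = 1$. The side condition $\|\yy\| \leq m\|\xx\|$ needed for $(\xx,\yy) \in J$ is automatic because the entries of $A$ lie in $[0,1]$, so $\|{}^t A\xx\|_\infty \leq m\|\xx\|$.

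Next I would enforce the constraint $|\bb \cdot \xx|_\Z > \varepsilon_0$ by the cited Kim--Kim perturbation lemma: if the produced $\xx$ fails the inequality, replace it by $\xx + \eee_i$, where $i$ is the index realizing $\varepsilon(\bb) = |b_i|_\Z/4$. Such a substitution necessarily succeeds, and it changes $\|\xx\|$ by at most one and $\|{}^t A\xx - \yy\|_\infty$ by at most $\|A_i\|_\infty \leq 1$, so $A$ remains within a universally controlled multiple of the same distance of the modified resonant hyperplane; this is absorbed by enlarging the constant $c$.

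The principal obstacle will be matching the scale of the vector $\xx$ produced by Dirichlet to the prescribed dyadic window $(2^{N-1}, 2^N]$. For $m = 1$ resonant sets are rational points and a direct continued-fraction-type argument yields ubiquity at the sharp rate $h^{-(1+n)/n}$ without further effort. For $m \geq 2$, however, Dirichlet may return a vector of norm much smaller than $2^N$, placing the natural scale of approximation below the dyadic window. The sequence $\{h_i\}$ constructed after Lemma \ref{BS4.2} is engineered precisely for this: on each block $(h_{i-1}, h_i]$ the divergent sum contributes at least $1$, and the factor $\omega(h) = i^{1/n}$ built into $\rho_c$ enlarges the thickening just enough to absorb approximations whose natural scale is sub-dyadic. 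The $2$-regularity of $\omega$ then underpins a telescoping/averaging argument over blocks that yields a captured measure bounded below by a positive proportion of $|B|$. This scale-matching step is the technical heart of the proof; the Dirichlet bound and the perturbation lemma are routine inputs.
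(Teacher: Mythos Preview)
The paper does not give its own proof of this statement: Theorem~\ref{KKTh4.5} is simply quoted verbatim from Kim--Kim \cite[Theorem~4.5]{KiKi_20} and used as a black box in the divergence argument for Theorem~\ref{BST2}. There is therefore nothing in the present paper to compare your proposal against.

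That said, your sketch does broadly follow the strategy one would expect (and which Kim--Kim employ): Dirichlet's theorem for ${}^tA$ produces a good approximating vector $\xx$, the perturbation $\xx\mapsto\xx+\eee_i$ via the cited lemma enforces $|\bb\cdot\xx|_{\Z}>\varepsilon_0$, and the extra factor $\omega(h)$ in $\rho_c$ for $m\geq 2$ is indeed there to absorb the loss from sub-dyadic scales. Be aware, though, that the perturbation step is more delicate than you indicate: after replacing $\xx$ by $\xx+\eee_i$ the error $\|{}^tA(\xx+\eee_i)\|_{\Z}$ can jump by order $1$, not by something comparable to $2^{-Nm/n}$, so one cannot simply ``absorb this into $c$.'' The actual argument requires exhibiting, for a positive-proportion subset of $B$, a vector in the correct dyadic window \emph{already} satisfying the $\bb$-constraint with the right approximation quality; the $\omega$-factor and a counting argument over the block structure are what make this work. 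Your final paragraph gestures at this but does not carry it out, and the earlier paragraph's claim that the perturbation costs only a constant is not correct as stated.
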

\noindent \textbf{The divergent part of Theorem \ref {BST2}}. 

Assume that $(m-1)n<s\leq mn$ and $r^{-nm}f(r)\to\infty$ as $r\to 0.$ It follows from Theorem \ref{thm5} and Theorem \ref{KKTh4.5} that
\begin{equation*}
\mathcal{H}^{f}(  \widehat{D}^{\bb}_{m, n}(\psi)^{c}       )\geq \mathcal{H}^{f}( W_{\bb,\varepsilon_{0}})=\mathcal{H}^{f}(  [0,1]^{mn}).
\end{equation*}
Similar as in \cite{KiKi_20} here we have used the fact that the divergence and convergence of the sums 
\begin{equation*}
\sum_{N=1}^{\infty}2^{\kappa N}\mathcal F(2^{N}) \text{ and } \sum_{h=1}^{\infty}h^{\kappa-1}\mathcal F(h)\end{equation*}
coincide for any monotonic function $\mathcal F:\Z_{+}\to\Z_{+}$ and $\kappa\in\R.$ This completes the proof of the divergent part of Theorem \ref{BST2}.

%
%
%
%

\end{document}